\newtheorem{theorem}{Theorem}   
\newtheorem{lemma}[theorem]{Lemma}
\newtheorem{problem}{Problem}
\newtheorem{proposition}[theorem]{Proposition}
\newtheorem{corollary}[theorem]{Corollary}
\newtheorem{example}{Example}
\newtheorem{remark}{Remark}
\newtheorem{definition}{Definition}
\newtheorem{assumption}{Assumption}
\newenvironment{proof}{\textbf{Proof:}}{\hfill$\square$}
\DeclareMathOperator{\sos}{SOS}
\begin{document}

\begin{frontmatter}

\title{Data-driven stabilization of polynomial systems using density functions\thanksref{footnoteinfo}} 

\thanks[footnoteinfo]{This paper was not presented at any IFAC 
meeting. Corresponding author Huayuan Huang. This work was supported by the China Scholarship Council 202106340040. Henk van Waarde acknowledges financial support by the Dutch Research Council (NWO) under the Talent Programme Veni Agreement (VI.Veni.22.335).}

\author[BI]{Huayuan Huang}\ead{huayuan.huang@rug.nl},              
\author[BI]{M. Kanat Camlibel}\ead{m.k.camlibel@rug.nl},
\author[BI]{Raffaella Carloni}\ead{r.carloni@rug.nl},    
\author[BI]{Henk J. van Waarde}\ead{h.j.van.waarde@rug.nl}

\address[BI]{Bernoulli Institute for Mathematics, Computer Science and Artificial Intelligence, Faculty of Science and Engineering, University of Groningen, 9747 AG Groningen, The Netherlands}     
          
\begin{keyword}                           
data-based control; polynomial systems; density functions; sum of squares.               
\end{keyword}

\begin{abstract}                          
This paper studies data-driven stabilization of a class of unknown polynomial systems using data corrupted by bounded noise. Existing work addressing this problem has focused on designing a controller and a Lyapunov function so that a certain state-dependent matrix is negative definite, which ensures asymptotic stability of all closed-loop systems compatible with the data. However, as we demonstrate in this paper, considering the negative definiteness of this matrix introduces conservatism, which limits the applicability of current approaches. To tackle this issue, we develop a new method for the data-driven stabilization of polynomial systems using the concept of density functions. The control design consists of two steps. Firstly, a dual Lyapunov theorem is used to formulate a sum of squares program that allows us to compute a rational state feedback controller for all systems compatible with the data. By the dual Lyapunov theorem, this controller ensures that the trajectories of the closed-loop system converge to zero for \textit{almost all} initial states. Secondly, we propose a method to verify whether the designed controller achieves asymptotic stability of all closed-loop systems compatible with the data. Apart from reducing conservatism of existing methods, the proposed approach can also readily take into account prior knowledge on the system parameters. A key technical result developed in this paper is a new type of S-lemma for a specific class of matrices that, in contrast to the classical S-lemma, avoids the use of multipliers.
\end{abstract}
\end{frontmatter}

\section{Introduction}
Research on data-driven control design for unknown dynamical systems has gained significant attention \cite{DDCZiegler1942optimum,iterativetuning,DDCVRFT,DDCStuggatt2020,DDCInformativity,DDCmatrixSlemma,DDCDensity}. This research can be approached from different angles, such as combining system identification and model-based control, or designing control laws directly from the data without the intermediate modeling step. Skipping the step of system identification can offer potential advantages, especially when the system cannot be modeled uniquely \cite{DDCInformativity}. In this study, we will focus on the latter approach, known as direct data-driven control.

Direct data-driven control methods are diverse, including intelligent proportional-integral-derivative control \cite{iPID}, data-driven optimal control \cite{LQR,databasedLQG} and learning-based approaches \cite{DDCLearningbased}. Some methods generate controllers that are updated online \cite{iterativetuning}, while others construct controllers using offline data \cite{DDCFormulas,DDCInformativity}. Inspired by Willems' fundamental lemma \cite{DDCJWilliems}, several recent works have contributed to the latter category. Specifically, for linear systems, a closed-loop system parametrization method \cite{DDCFormulas} has been derived for stabilization and optimal control using persistently exciting data \cite{DDCJWilliems}. This method synthesizes controllers through solving data-based linear matrix inequalities (LMIs). Results in \cite{DDCInformativity} reveal that persistency of excitation is not necessary for certain analysis and control problems, including stabilization. In some cases, data-driven analysis and control is possible, while the data are not informative for system identification \cite{DBLSCT2025}. Under the same framework of data informativity, \cite{DDCmatrixSlemma} investigates another crucial aspect in data-driven control, namely the presence of noise affecting the measured data. In \cite{DDCmatrixSlemma}, a non-conservative design method for stabilizing linear systems using noisy data is developed via a matrix generalization of the classical S-lemma \cite{SurveySlemma}. For nonlinear systems, data-driven control with noisy data is studied in \cite{DDCmiddleTAC,DDCBeyondpolynomial,DDCDensity}. Some approaches rely on a so-called ``linear-like" form, inspired by model-based methods \cite{Originalmiddle}. By incorporating a relaxation method based on a sum of squares (SOS) decomposition \cite{OriginalSOSThesis}, controller design methods for polynomial \cite{DDCmiddleTAC} and rational systems \cite{DDCBeyondpolynomial} are presented by using noisy data. However, the approach taken in \cite{DDCmiddleTAC} has inherent limitations, as we will discuss in Section \ref{section Connection to previous work} of this paper. A different strategy based on Rantzer's dual Lyapunov theorem \cite{DensityDual} and Farkas' Lemma has been proposed for nonlinear systems in \cite{DDCDensity}. When applied to polynomial systems, however, this approach requires a large number of SOS constraints, depending on the product of the state-space dimension and the time horizon of the experiments. In addition, the method only provides theoretical guarantees for the convergence of the state to the origin for \emph{almost all} initial states but not for asymptotic stability, cf. \cite{DensityDual}.

In this study, we focus on the problem of data-driven stabilization of polynomial systems, assuming that the data are corrupted by unknown but bounded noise. Although existing approaches \cite{DDCmiddleCDC,DDCmiddleTAC} are effective in some cases, there are many examples in which they are unable to produce a stabilizing controller. In fact, as we demonstrate in Section 3, this conservatism is due to the fact that existing methods impose that a certain state-dependent matrix is negative definite. The negative definiteness of this matrix is sufficient, but not necessary for the existence of a stabilizing controller. Motivated by these observations, in this paper we propose a new method for data-driven stabilization of polynomial systems that avoids working with this state-dependent matrix. Instead, one of the core concepts used in this paper is the notion of density functions \cite{DensityDual}.

Inspired by \cite{DDCmiddleCDC,DDCmiddleTAC}, we consider input-affine polynomial systems where the vector field and input matrix consist of unknown linear combinations of given polynomials. We assume that some of the parameters of the system are known, while the remaining ones are unknown. This prior knowledge is motivated by physical systems, where known parameters may arise, for example, because variables that represent velocities are derivatives of positions. Within this setup, we represent the set of systems that are compatible with the given input-state data and the prior knowledge by a quadratic inequality. Our aim is to design a robust state feedback controller stabilizing all systems within this set. The control design consists of two steps. First, the dual Lyapunov theorem \cite{DensityDual} is applied to compute a rational control function which ensures that the trajectories of all closed-loop systems compatible with the data and the prior knowledge converge to zero for \textit{almost all} initial states. Second, the asymptotic stability of all closed-loop systems is verified using a common Lyapunov function. To ensure that conditions are satisfied for all systems in the set, we propose a specialized version of the S-lemma, which presents necessary and sufficient conditions for a linear inequality to be implied by a quadratic one. Compared to the classical S-lemma \cite{SurveySlemma}, our version eliminates the need for introducing a scalar variable, known as a multiplier. In addition, for both the control design and the verification of asymptotic stability, we provide computationally tractable approaches using SOS programming.

Our main contributions are the following:
\begin{enumerate}
    \item We propose a new data-driven control design method for a class of polynomial systems, where the novelty lies in formulating the control framework on the basis of density functions.
    \item As the backbone of this method, we establish a novel technical result that provides necessary and sufficient conditions under which a quadratic inequality implies a \emph{linear} one.
\end{enumerate}
Compared to the state-of-the-art, our method has the following benefits:
\begin{enumerate}[label=(\roman*)]
    \item The proposed stabilization approach radically differs from methods in \cite{DDCmiddleCDC, DDCmiddleTAC} that work with the negative definiteness of a state-dependent matrix. We show that the proposed method can produce stabilizing controllers in examples to which these previous methods are not applicable (see Section~\ref{section Connection to previous work}). 
    \item Compared to \cite{DDCDensity}, the size of our SOS constraints does not depend on the data length. We also provide LMI conditions under which the closed-loop system is asymptotically stable (for all initial states).
    \item In contrast to all previous work on data-driven control of polynomial systems, the proposed approach can readily take into account prior knowledge on the system parameters. This contributes to reducing the conservatism of data-driven control of polynomial systems and is also useful in physical systems where certain system parameters are given.
    \item In comparison to the classical S-lemma \cite{SurveySlemma}, our main technical result has the advantage that it does not require a multiplier. This is shown to be beneficial from a computational point of view in the context of stabilization of polynomial systems. 
\end{enumerate}

This article is organized as follows. In Section \ref{section Notation and Problem formulation}, we formulate the problem, followed by a discussion on previous work in Section \ref{section Connection to previous work}. We present our main results in Section \ref{section Methodology}. Then, in Section \ref{section Illustrative examples}, we validate our method through three illustrative examples. The new S-lemma and the proofs of our main results are given in Section \ref{section Proofs}. Finally, Section \ref{section Conclusion} concludes this article. 

\section{Notation and problem formulation}
\label{section Notation and Problem formulation}
\subsection{Notation}
The Euclidean norm of a vector $x\in \mathbb{R}^n$ is denoted by $\|x\|$. For a set of indices $\alpha \subseteq \{1, 2, \dots, p\}$, we define $v_{\alpha}$ as the subvector of $v \in \mathbb{R}^p$ corresponding to the entries indexed by $\alpha$, and $A_{\alpha \bullet}$ as the submatrix of $A \in \mathbb{R}^{p \times q}$ formed by the rows indexed by $\alpha$. For a matrix $Y = \begin{bmatrix} y_1 & y_2 & \cdots & y_q \end{bmatrix}$, where $y_i \in \mathbb{R}^p$ for $i = 1,2, \dots, q $, the vectorization of $Y$ is defined as $\text{vec}(Y) = \begin{bmatrix}
	y_1^{\top} & y_2^{\top} & \cdots & y_q^{\top}
\end{bmatrix}^{\top}$. The Kronecker product of two matrices $A\in \mathbb{R}^{n\times m}$ and $B\in \mathbb{R}^{p\times q}$ is denoted by $A \otimes B\in \mathbb{R}^{np\times mq}$. The space of real symmetric $n\times n$ matrices is denoted by $\mathbb{S}^n$. A matrix $A \in \mathbb{S}^n$ is called positive definite if $x^{\top}Ax>0$ for all $x\in \mathbb{R}^n \setminus \{0\}$ and positive semidefinite if $x^{\top}Ax \geq 0$ for all $x\in \mathbb{R}^n$. This is denoted by $A>0$ and $A \geq 0$, respectively. Negative semidefiniteness and negative definiteness is defined in a similar way and denoted by $A < 0$ and $A \leq 0$, respectively. For a matrix $A \geq 0$, there exists a unique $B\geq 0$ satisfying $A = BB$, and we define $A^{\frac{1}{2}} = B$.

The set of all multivariate polynomials with coefficients in $\mathbb{R}^{p\times q}$ in the variables $x_1,x_2,\dots,x_n$ is denoted by $\mathbb{R}^{p\times q}[x]$, where $x = \begin{bmatrix}
    x_1&x_2& \cdots &x_n
\end{bmatrix}^{\top}$. When $q=1$ we simply write $\mathbb{R}^{p}[x]$, and when $p=q=1$ we use the notation $\mathbb{R}[x]$. We define the set of functions
$$
    \mathcal{V}:= \{ V \in \mathbb{R}[x] \mid  V(0) = 0 \text{ and } V(x) > 0\ \forall x\in \mathbb{R}^n \setminus\{ 0\}\}.
$$
A multivariate polynomial $Q \in \mathbb{R}^{q\times q}[x]$ is called a SOS polynomial if there exists a $\hat{Q} \in \mathbb{R}^{p\times q}[x]$ such that $Q(x) = \hat{Q}^{\top}(x)\hat{Q}(x)$. The set of all $q\times q$ SOS polynomials is denoted by $\sos^q[x]$. For $q=1$, we simply write $\sos[x]$.

\subsection{Problem formulation}
\label{section Problem formulation}
Consider the polynomial input-affine system 
\begin{equation} \label{realsys}
    \dot{x}(t) = A_{\textnormal{true}} F(x(t)) + B_{\textnormal{true}} G(x(t))u(t)+w(t),
\end{equation}
where $x(t)\in \mathbb{R}^n$ is the state, $u(t)\in \mathbb{R}^m$ is the input, and $w(t)\in \mathbb{R}^n$ is the unknown noise term. We assume that $A_{\textnormal{true}} \in \mathbb{R}^{n \times f}$ and $B_{\textnormal{true}} \in \mathbb{R}^{n \times g}$ are unknown, but $F \in \mathbb{R}^{f}[x]$ and $G \in \mathbb{R}^{g\times m}[x]$ are given. For a positive integer $T$ and $t_0 < t_1 < \cdots < t_{T-1}$, consider the input samples $u(t_0),u(t_1),\dots,u(t_{T-1})$, the state samples $x(t_0),x(t_1),\dots,x(t_{T-1})$ and the state derivative samples $\dot{x}(t_0),\dot{x}(t_1),\dots,\dot{x}(t_{T-1})$.
\begin{remark}
    We note that our approach relies on measurements of state derivatives. In some cases, these derivatives may be measured directly, as is the case, for instance, for mechanical systems. When state derivatives cannot be measured directly, there are several approaches to obtain state derivative data from input-state data. In \cite{Ohtasamplingdata}, a sampling framework has been introduced to calculate exact state derivative samples from state measurements. Moreover, \cite{Stuttgartdiscretizations} and \cite{PaoloOrthogonal} propose methods based on discretizations and orthogonal polynomial bases, respectively, to approximate state derivatives. The latter work also derives error bounds that quantify the accuracy of the estimates.
\end{remark}
In addition, consider the noise samples $w(t_0),w(t_1), \dots,\allowbreak w(t_{T-1})$. Then, we have
\begin{equation}\label{realdataeq}
    \dot{\mathcal{X}} = A_{\textnormal{true}} \mathcal{F} + B_{\textnormal{true}} \mathcal{G} \mathcal{U} + \mathcal{W},
\end{equation}
where
$$
    \begin{aligned}
	\dot{\mathcal{X}} &:= \begin{bmatrix} 
            \dot{x}(t_0) & \dot{x}(t_1) & \cdots & \dot{x}(t_{T-1})
	\end{bmatrix},\\
		\mathcal{F} &:= \begin{bmatrix}
			F(x(t_0)) & F(x(t_1)) & \cdots & F(x(t_{T-1}))
		\end{bmatrix},\\
		\mathcal{G} &:= \begin{bmatrix}
			G(x(t_0)) & G(x(t_1)) & \cdots & 	G(x(t_{T-1}))
		\end{bmatrix},\\
		\mathcal{U} &:= \begin{bmatrix}
				u(t_0) & 0 & \cdots & 0\\
				0 & u(t_1) & \cdots & 0\\
				\vdots & \vdots & \ddots & \vdots\\
				0 & 0 & \cdots & u(t_{T-1})
			\end{bmatrix},\\
		\mathcal{W} &:= \begin{bmatrix}
				w(t_0) & w(t_1) & \cdots & w(t_{T-1})
			\end{bmatrix}.
            \end{aligned}
$$
We also define
$$
	\mathcal{X} := \begin{bmatrix}
		x(t_0) & x(t_1) & \cdots & x(t_{T-1})
	\end{bmatrix}.
$$
Although the noise matrix $\mathcal{W}$ is unknown, we assume that it satisfies the quadratic inequality
\begin{equation} \label{noisebound}
	\begin{bmatrix}
		1 \\ \text{vec}(\mathcal{W}^{\top})
	\end{bmatrix}^{\top} 
		\Phi \begin{bmatrix}
		1\\ \text{vec}(\mathcal{W}^{\top})
	\end{bmatrix} \geq 0,
\end{equation}
where
\begin{equation} \label{Phi}
    \Phi := \begin{bmatrix}
	\Phi_{11} & 0\\
	0 & -I
\end{bmatrix}
\end{equation}
is a given matrix with $\Phi_{11} \geq 0$. The noise model in \eqref{noisebound} is equivalent to the energy bound
\begin{equation} \label{noiseboundenergy}
    \sum_{j=1}^n \sum_{i=0}^{T-1}w_j^2(t_i) \leq \Phi_{11}.
\end{equation} 
If the noise samples at every time instant are bounded in norm, that is,
$$
	\|w(t_i)\|^2 \leq \omega \quad \forall i = 0,1,\dots,T-1,
$$
then \eqref{noiseboundenergy} holds with $\Phi_{11} = \omega T$. 

\begin{remark}
It can be verified that the approach of this paper can be extended to the case that
$$
\Phi = \begin{bmatrix}
    \Phi_{11} & \Phi_{12}\\
    \Phi_{21} & \Phi_{22}
\end{bmatrix},
$$
provided that $\Phi_{22} < 0$ and $\Phi | \Phi_{22} \geq 0$, where $\Phi | \Phi_{22}:= \Phi_{11} - \Phi_{12}\Phi_{22}^{-1}\Phi_{21}$ denotes the Schur complement of $\Phi$ with respect to $\Phi_{22}$. For simplicity, throughout this paper, we only consider the specific $\Phi$, as defined in \eqref{Phi}.
\end{remark}

We denote the set of all systems compatible with the data $(\dot{\mathcal{X}},\mathcal{X},\mathcal{U})$ by $\Sigma_d$:
\begin{equation}
\begin{aligned}
    \Sigma_d := \{(A,B)\mid \dot{\mathcal{X}} = & A \mathcal{F} + B \mathcal{G} \mathcal{U} + \mathcal{W}\text{ holds }\\
    & \text{for some }\mathcal{W}\text{ satisfying \eqref{noisebound}} \}.
\end{aligned}
\end{equation}
It follows from \eqref{realdataeq} that $(A_{\textnormal{true}}, B_{\textnormal{true}}) \in \Sigma_d$, but in general, $\Sigma_d$ may include other systems. 

In the definition of $\Sigma_d$, the entries of $A_{\textnormal{true}}$ and $B_{\textnormal{true}}$ are not assumed to be known. However, in physical systems, certain system parameters are often known. For example, in a mechanical system, the variable $x_1$ may represent the position of a mass while $x_2$ represents its velocity. In this case, the first rows of the matrices $A_{\textnormal{true}}$ and $B_{\textnormal{true}}$ are fully known. We will further illustrate this in Section \ref{section Illustrative examples} where we will consider an example of a physical system. Accordingly, we will incorporate this prior knowledge about the entries of $A_{\textnormal{true}}$ and $B_{\textnormal{true}}$ into our analysis. In order to formalize the prior knowledge, we define the set $S \subseteq \{1,2,\dots, n\} \times \{1,2,\dots, f+g\}$, and assume the entry $\begin{bmatrix}
A_{\textnormal{true}} & B_{\textnormal{true}}
\end{bmatrix}_{ij}$ is given for all $(i,j) \in S$. The set of systems compatible with the prior knowledge is given by
\begin{equation} \label{set sigma p}
\begin{aligned}
    \Sigma_{pk}(S) := &\Big\{(A,B) \mid \\
    &\begin{bmatrix}
		A & B
	\end{bmatrix}_{ij} = \begin{bmatrix}
		A_{\textnormal{true}} & B_{\textnormal{true}}
	\end{bmatrix}_{ij}  \forall (i,j) \in S \Big\}.
\end{aligned}
\end{equation}
Subsequently, we define the set of systems compatible with both the data and the prior knowledge as
\begin{equation} \label{definition Sigma}
	\Sigma := \Sigma_d \cap \Sigma_{pk}(S).
\end{equation}
Note that the case that all entries of $A_{\textnormal{true}}$ and $B_{\textnormal{true}}$ are unknown can be captured by setting $S = \varnothing$, which implies $\Sigma = \Sigma_d$. It is clear from \eqref{realdataeq} and \eqref{set sigma p} that the system $(A_{\textnormal{true}},B_{\textnormal{true}})$ belongs to $\Sigma$. However, in general, $\Sigma$ contains other systems because the data may not uniquely determine $A_{\textnormal{true}}$ and $B_{\textnormal{true}}$, even if some entries of $A_{\textnormal{true}}$ and $B_{\textnormal{true}}$ are known.

The goal of this paper is to find a controller that stabilizes the origin of the system $(A_{\textnormal{true}},B_{\textnormal{true}})$. Since on the basis of the data and the prior knowledge we cannot distinguish between $(A_{\textnormal{true}}, B_{\textnormal{true}})$ and any other system in $\Sigma$, we need to find a single controller that stabilizes the origin of all systems in $\Sigma$. This motivates the following definition of informative data for stabilization of polynomial systems. In the rest of the paper, we assume that 
$$
F(0) = 0.
$$
\begin{definition} \label{definition 1}
	{\rm The data $(\dot{\mathcal{X}},\mathcal{X},\mathcal{U})$ are called} informative for stabilization {\rm if there exist a radially unbounded function $V \in \mathcal{V}$ and a continuous controller $K: \mathbb{R}^n \rightarrow \mathbb{R}^m$ such that $K(0) = 0$ and
	\begin{equation} \label{oridotVx}
		\frac{\partial V(x)}{\partial x}(A F(x) + B G(x)K(x)) < 0\quad \forall x\in \mathbb{R}^n \setminus\{ 0\},
	\end{equation}
	for all $(A,B) \in \Sigma$.}
\end{definition}
Note that for a controller $K$ satisfying $K(0) = 0$, the origin of the closed-loop system
\begin{equation} \label{closedloopsystem}
	\dot{x} = A F(x) + B G(x)K(x),
\end{equation}
is an equilibrium point, as $F(0) = 0$. If \eqref{oridotVx} holds then the origin is globally asymptotically stable for all closed-loop systems obtained by interconnecting any system $(A,B) \in \Sigma$ with the controller $u=K(x)$.

In this paper, we study the following two problems. 
\begin{problem}[Informativity] \label{problem 1}
	Find conditions under which the data $(\dot{\mathcal{X}},\mathcal{X},\mathcal{U})$ are informative for stabilization. 
\end{problem}
\begin{problem}[Controller design]  \label{problem 2}
	Suppose the data $(\dot{\mathcal{X}},\mathcal{X},\mathcal{U})$ are informative for stabilization. Find a controller $u=K(x)$ satisfying $K(0) = 0$ and \eqref{oridotVx}.
\end{problem}

\section{Connection to previous work} \label{section Connection to previous work}
Current approaches for data-driven control of polynomial systems \cite{DDCmiddleCDC,DDCmiddleTAC} build on the model-based method proposed in \cite{Originalmiddle}. These methods do not incorporate prior knowledge and instead focus on designing a common stabilizing controller for all systems compatible with the data. In these works, the controller is considered to be of the form
$$
	K(x) = Y(x)PZ(x),
$$
where $Y \in \mathbb{R}^{m \times p}[x]$, $P \in \mathbb{S}^p$ is positive definite, and $Z \in \mathbb{R}^p[x]$ is radially unbounded satisfying 
\begin{equation} \label{FHZ}
	F(x) = H(x) Z(x),
\end{equation}
for some $H \in \mathbb{R}^{f \times p}[x]$. The choice of candidate Lyapunov function 
\begin{equation} \label{V=ZPZ}
	V(x) = Z^{\top}(x) P Z(x),
\end{equation}
then leads to
$$
	\frac{\partial V}{\partial x}(x)(AF(x) + BG(x)K(x)) = 2Z^{\top}(x) P \Theta(x) PZ(x),
$$
where 
$$
	\Theta(x) := \frac{\partial Z}{\partial x}(x)\begin{bmatrix}
		A & B
	\end{bmatrix} \begin{bmatrix}
		H(x) P^{-1} \\ G(x)Y(x)
	\end{bmatrix}.
$$
The main idea in this line of work is to find $P$ and $Y(x)$ such that
\begin{equation} \label{cond middle}
	-\Theta(x)-\Theta^{\top}(x) > 0 \quad \forall x \in \mathbb{R}^n \setminus \{0\},
\end{equation}  
for all systems $(A,B)$ compatible with the data. In the earlier work \cite{DDCmiddleCDC}, $H(x)$ is taken to be equal to the identity matrix, which implies that $Z(x)=F(x)$. In contrast, \cite{DDCmiddleTAC} considers more general $Z(x)$ satisfying \eqref{FHZ}. This strategy is appealing because it leads to data-based linear matrix inequalities for control design. Unfortunately, however, the method also has some major limitations.
\begin{enumerate}
\item \label{limitation 1} The matrix $\frac{\partial Z}{\partial x}(x)$ must have full row rank for all $x \in \mathbb{R}^n \setminus \{0\}$.

Indeed, suppose that there exists a nonzero $x$ such that $\frac{\partial Z}{\partial x}(x)$ does not have full row rank. Then $\Theta(x)$ is singular, which implies that \eqref{cond middle} does not hold. Note that the full row rank condition can only hold if $p \leq n$, i.e., the number of polynomials in $Z$ is less than or equal to the state-space dimension of the system. This limits the class of Lyapunov functions of the form \eqref{V=ZPZ} that can be considered by the approach. In \cite{DDCmiddleTAC}, a strategy for choosing $Z$ is proposed, where the first $n$ components of $Z(x)$ coincide with the state $x$. However, since $\frac{\partial Z}{\partial x}(x)$ is required to have full row rank, this leaves only one possibility for $Z$, namely $Z(x) = x$. Furthermore, even if $p \leq n$, the full row rank condition rules out certain natural choices of $Z$. In fact, for relatively simple polynomial systems, the method does not yield stabilizing controllers.

For example, recall the feedback linearizable system studied in \cite{DDCmiddleCDC}:
\begin{equation} \label{example in CDC}
\begin{aligned}
\dot{x}_1 = x_2,\quad 
\dot{x}_2 = x_1^2+u.
\end{aligned}
\end{equation}
Since the paper \cite{DDCmiddleCDC} deals with the special case that $H(x) = I$ and $Z(x) = F(x)$, the authors chose $
Z(x)=\begin{bmatrix}
    x_2 & x_1^2
\end{bmatrix}^{\top}.
$
The matrix,
\begin{equation} \label{pZpx}
    \frac{\partial Z}{\partial x}(x) = \begin{bmatrix}
			0&1\\2x_1&0
    \end{bmatrix},
\end{equation}
does not have full rank when $x_1 = 0$, and therefore \eqref{cond middle} does not hold. Hence, it is not possible to find a stabilizing controller using this method, in contrast to what was claimed in  \cite[Sec. IV]{DDCmiddleCDC}.
	
Despite the increased flexibility in selecting $H(x)$ in \cite{DDCmiddleTAC}, the approach can still fail for any choice of $H(x)$. To illustrate this, consider the system
\begin{equation}\label{example moti sim}
\begin{aligned}
\dot{x}_1 = x_2-x_1^2,\quad
\dot{x}_2 = u.
\end{aligned}
\end{equation}
After defining $F(x) = \begin{bmatrix}
    x_2 & x_1^2 
\end{bmatrix}^{\top}$ and $G(x) = 1$, the matrices $A_{\textnormal{true}}$ and $B_{\textnormal{true}}$ are given by
$$
A_{\textnormal{true}} = \begin{bmatrix}
1 & -1 \\
0 & 0
\end{bmatrix}\  \text{and}\  
B_{\textnormal{true}}	 = \begin{bmatrix}
0\\
1
\end{bmatrix}.
$$
If $Z(x) = F(x)$, then $\frac{\partial Z}{\partial x}(x)$ is given by \eqref{pZpx}, which does not have full rank for all $x \in \mathbb{R}^n \setminus \{0\}$. Therefore, we conclude that \eqref{cond middle} does not hold for any choice of $H(x)$.
	
\item The matrix inequality \eqref{cond middle} is a sufficient condition for the inequality \eqref{oridotVx}, but a conservative one. \label{limitation 2}
	
Indeed, in general, \eqref{oridotVx} does not imply \eqref{cond middle}. As we demonstrate next, this can lead to failure of the approach to yield stabilizing controllers for any choice of $H(x)$ and natural choices of $Z(x)$. Consider the system in \eqref{example moti sim}. We now choose $Z(x)= x$. Clearly, all $H(x)$ are of the form
$$
H(x) = H_1(x)+J(x)\begin{bmatrix}
x_2 & -x_1
\end{bmatrix},
$$
where $J \in \mathbb{R}^2[x]$ is arbitrary and $H_1(x) = \begin{bmatrix}
0 & 1 \\ x_1 & 0
\end{bmatrix}$. Next, we partition
$$ 
\begin{aligned}
P^{-1}\! = \begin{bmatrix}
p & q\\
q & r
\end{bmatrix}\!, \   J(x) = \begin{bmatrix}
J_1(x) \\ J_2(x)
\end{bmatrix}\!, \ Y(x) = \begin{bmatrix}
Y_1(x) \\ Y_2(x)
\end{bmatrix}^{\top}\!. 
\end{aligned}
$$ 
If \eqref{cond middle} holds for all systems compatible with the data, this condition must be satisfied for the system $(A_{\textnormal{true}}, B_{\textnormal{true}})$. However, when $A = A_{\textnormal{true}}$ and $B=B_{\textnormal{true}}$, we have
$$
\Theta_{11}(x) = q-px_1+(J_1(x)-J_2(x))(px_2-qx_1).
$$
Since $P>0$, it follows that $p>0$. We now choose $x_2 = \frac{q}{p}x_1$, which leads to $\Theta_{11}(x) = q-px_1$. Therefore, there exists a sufficiently large $x_1$ such that $\Theta_{11}(x) < 0$. We conclude that \eqref{cond middle} does not hold for any choice of $H(x)$. Nonetheless, there exists a stabilizing controller for the system \eqref{example moti sim}. In fact, the controller $K(x) = -2x_1-2x_2+2x_1^2+2x_1x_2-2x_1^3$ renders the origin of the closed-loop system \eqref{closedloopsystem} globally asymptotically stable. This can be verified by taking the Lyapunov function 
$$
V(x) = x_1^2+(x_1+x_2-x_1^2)^2.
$$
\end{enumerate}

This motivates us to develop methods for data-driven stabilization of polynomial systems that do not aim for the positive definiteness of $-\Theta(x)-\Theta^{\top}(x)$ for all nonzero $x$, but that work directly with $V(x)$ and $\dot{V}(x)$. As we will show later on, the methodology developed in this paper is applicable to the systems \eqref{example in CDC} and \eqref{example moti sim}.

\section{Methodology}\label{section Methodology}

In this section, we will introduce our approach. First, in Section \ref{section Alternative description of the set}, we find an alternative expression for the set $\Sigma$ in \eqref{definition Sigma}. Subsequently, we present a data-driven control design method in Section \ref{section Data-driven control design}. This method only guarantees that \textit{almost all} trajectories of the closed-loop system converge to zero. In Section \ref{section Stability verification} we provide necessary and sufficient conditions under which the zero equilibrium point of the closed-loop system is globally asymptotically stable. Finally, computational aspects are discussed in Section \ref{section Computational approaches}.

\subsection{Alternative description of the set $\Sigma$} \label{section Alternative description of the set}
Recall that $(A,B) \in \Sigma$ if and only if $(A,B) \in \Sigma_{pk}(S)$ and 
\begin{equation} \label{dataeq}
	\dot{\mathcal{X}} = A \mathcal{F} + B \mathcal{G} \mathcal{U} + \mathcal{W}
\end{equation}
holds for some $\mathcal{W}$ satisfying the quadratic inequality \eqref{noisebound}. By vectorizing \eqref{dataeq}, we obtain
\begin{equation} \label{vec W}
	\text{vec}(\dot{\mathcal{X}}^{\top}) = \mathcal{D}^{\top} v+ \text{vec}(\mathcal{W}^{\top}),
\end{equation}
where 
\begin{equation} \label{Def D v}
	\mathcal{D} := \begin{bmatrix}
		I \otimes \mathcal{F} \\ I \otimes \mathcal{G}\mathcal{U}
	\end{bmatrix} \  \text{and}\  v := \begin{bmatrix}
		\text{vec}(A^{\top}) \\ \text{vec}(B^{\top})
	\end{bmatrix}.
\end{equation}
Let $s$ be the set of indices for which the corresponding entries of $v$ are known and let $\bar{s} = \{1,2,\dots, n(f+g)\} \setminus s$. We rewrite \eqref{vec W} as
\begin{equation} \label{vec W tilde v}
	\text{vec}(\mathcal{W}^{\top}) = \text{vec}(\dot{\mathcal{X}}^{\top}) - \mathcal{D}_{s \bullet}^{\top} v_{s} - \mathcal{D}_{\bar{s} \bullet}^{\top} v_{\bar{s}}.
\end{equation}
By the noise model \eqref{noisebound}, this implies that $(A,B) \in \Sigma$ if and only if the corresponding $v_{\bar{s}}$ satisfies
\begin{equation} \label{bound vsbar}
	\begin{bmatrix}
		1 \\ v_{\bar{s}}
	\end{bmatrix}^{\top} N
	\begin{bmatrix}
		1 \\ v_{\bar{s}}
	\end{bmatrix} \geq 0,
\end{equation}
where
\begin{equation} \label{data matrix N}
\begin{aligned}
    N := \begin{bmatrix}
1 & 0 \\
\text{vec}(\dot{\mathcal{X}}^{\top}) - \mathcal{D}_{s \bullet}^{\top} v_{s} & 	- \mathcal{D}_{\bar{s} \bullet}^{\top}
\end{bmatrix}^{\top}\Phi \quad \quad\quad\quad\quad&\\
\begin{bmatrix}
1 & 0 \\
\text{vec}(\dot{\mathcal{X}}^{\top}) - \mathcal{D}_{s \bullet}^{\top} v_{s} & 	- \mathcal{D}_{\bar{s} \bullet}^{\top}
\end{bmatrix}&.
\end{aligned}
\end{equation}
Let $\ell$ denote the cardinality of the set $\bar{s}$. Then $N \in \mathbb{S}^{1+\ell}$. With this notation in place, the set $\Sigma$, defined as in \eqref{definition Sigma}, is characterized by a quadratic inequality in terms of the corresponding $v_{\bar{s}}$. 

\subsection{Data-driven control design} \label{section Data-driven control design}
To present our data-driven control approach, we first recall the dual Lyapunov theorem proposed in \cite{DensityDual}. In the following theorem, the divergence of a differentiable function $f: \mathbb{R}^p \rightarrow \mathbb{R}^p$ is defined as
$$
	(\nabla \cdot f)(x) = \frac{\partial f_1}{\partial x_1}(x) + \frac{\partial f_2}{\partial x_2}(x) + \cdots + \frac{\partial f_p}{\partial x_p}(x).
$$
Moreover, the terminology ``almost all $x$" means all $x$ except for a set of measure zero. 
\begin{theorem}\label{theorem density}
Consider the system 
\begin{equation} \label{theorem density system}
		\dot{x}(t) = f(x(t)),
\end{equation}
where $f : \mathbb{R}^n \rightarrow \mathbb{R}^n$ is continuously differentiable and $f(0) = 0$. Suppose that there exists a $\rho : \mathbb{R}^n \rightarrow \mathbb{R}_+$ such that
\begin{enumerate}[label=(\roman*)]
\item \label{theorem density a}$\rho$ is continuously differentiable on $\mathbb{R}^n \setminus \{ 0 \}$,
\item \label{theorem density b} $\rho(x)f(x)/\|x\|$ is integrable on $\{ x \in \mathbb{R}^n \mid \|x\| \geq 1 \}$,
\item \label{theorem density c} $ \nabla \cdot (\rho f)(x) > 0$ for almost all $ x \in \mathbb{R}^n$.
\end{enumerate}
Then, for almost all initial states $x(0) \in \mathbb{R}^n$ the trajectory $x(t)$ of system \eqref{theorem density system} exists for $t \in \left[\left.0, \infty \right)\right.$ and tends to zero as $t \rightarrow \infty$.
\end{theorem}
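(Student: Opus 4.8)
The plan is to prove Theorem \ref{theorem density} by a measure-theoretic argument built around the flow of the vector field, following the density-transport idea. Denote by $\phi_t(x)$ the solution of \eqref{theorem density system} with $\phi_0(x) = x$, defined on its maximal interval of existence, and let $\Phi_t(x) := \det \frac{\partial \phi_t}{\partial x}(x)$ be the Jacobian determinant of the flow map. Since $f \in C^1$ and $f(0) = 0$, the flow is well defined, $\Phi_0(x) = 1$, and $\Phi_t(x) > 0$ as long as the trajectory exists. The engine of the proof is Liouville's formula, $\frac{d}{dt}\Phi_t(x) = (\nabla \cdot f)(\phi_t(x))\,\Phi_t(x)$, from which the central identity follows:
\[
\frac{d}{dt}\big[\rho(\phi_t(x))\,\Phi_t(x)\big] = \big[\nabla \cdot (\rho f)\big](\phi_t(x))\,\Phi_t(x).
\]
This identity, which uses assumption \ref{theorem density a} so that $\rho f$ is differentiable along trajectories away from the origin, is where the density function enters, and it is the key computation I would establish first.

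Integrating this identity over a measurable set $Z$ and applying the change-of-variables formula $\int_{\phi_t(Z)} g\,dy = \int_Z g(\phi_t(x))\,\Phi_t(x)\,dx$, I obtain the transport law
\[
\frac{d}{dt}\int_{\phi_t(Z)}\rho(y)\,dy = \int_{\phi_t(Z)} \big[\nabla\cdot(\rho f)\big](y)\,dy.
\]
By assumption \ref{theorem density c} the right-hand side is strictly positive whenever $\phi_t(Z)$ has positive Lebesgue measure, so the $\rho$-weighted measure $\mu_\rho(\phi_t(Z)) := \int_{\phi_t(Z)}\rho\,dy$ is strictly increasing in $t$. Here I would also note that the $C^1$ flow maps null sets to null sets, so that the almost-everywhere positivity in \ref{theorem density c} is preserved under the flow and does not interfere with the argument.

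Then I would argue by contradiction: suppose the set of initial states whose trajectory either fails to exist for all $t \ge 0$ or does not tend to $0$ has positive measure. Writing this set as a countable union, I can extract an $\epsilon > 0$ and a set $Z$ of positive measure such that $\phi_t(Z)$ avoids the ball $\{\|x\| < \epsilon\}$ for all $t \ge 0$, and I would split into two regimes. For trajectories that remain in a compact annulus $\{\epsilon \le \|x\| \le M\}$, the quantity $\mu_\rho(\phi_t(Z))$ is bounded above by $\int_{\{\epsilon \le \|x\| \le M\}}\rho < \infty$, which clashes with its strict monotone increase. For trajectories escaping to infinity (and, similarly, those blowing up in finite time), assumption \ref{theorem density b} is exactly what controls the escape: integrability of $\rho f/\|x\|$ on $\{\|x\| \ge 1\}$ bounds the net flux of $\rho$-mass through large spheres via the divergence theorem, preventing the strictly produced mass from leaking to infinity.

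The main obstacle is this final contradiction step, and in particular the trapped-annulus regime: the transport law only yields strict positivity of $\int_{\phi_t(Z)}\nabla\cdot(\rho f)\,dy$, not a uniform positive lower bound, since $\nabla\cdot(\rho f)$ may vanish on a null set toward which $\phi_t(Z)$ could in principle drift. Making the argument rigorous therefore requires carefully combining the monotonicity of $\mu_\rho(\phi_t(Z))$, its finite upper bound on compacta, and the behavior of $\Phi_t$ to rule out that $\phi_t(Z)$ concentrates where the divergence degenerates. Coordinating the escaping and trapped cases, together with the measurable selection of $\epsilon$ and $Z$, is the delicate part; the Liouville identity itself is routine once the flow machinery is in place.
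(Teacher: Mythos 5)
The paper does not actually prove this statement: Theorem \ref{theorem density} is Rantzer's dual Lyapunov theorem, imported verbatim from \cite{DensityDual} and used as a black box, so there is no in-paper proof to compare your attempt against. Judged on its own terms, your sketch assembles the right machinery (Liouville's formula, the transport identity $\tfrac{d}{dt}\int_{\phi_t(Z)}\rho\,dy=\int_{\phi_t(Z)}\nabla\cdot(\rho f)\,dy$, and the intended role of each hypothesis), but it contains two genuine gaps. First, the reduction step fails as stated: from ``the set of initial states whose trajectory does not tend to $0$ has positive measure'' you cannot extract an $\epsilon>0$ and a positive-measure $Z$ with $\phi_t(Z)\cap\{\|x\|<\epsilon\}=\varnothing$ for all $t\ge 0$. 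A non-convergent trajectory may enter and leave every ball infinitely often, so the bad set is not a countable union of ``forever-avoiding'' sets, and these recurrent trajectories are precisely where the work lies. Closing this (essentially as in Rantzer's argument) requires first showing $\int_{\{\|x\|>\epsilon\}}\nabla\cdot(\rho f)\,dx<\infty$ via the divergence theorem together with the integrability hypothesis on $\rho f/\|x\|$ (which kills the flux through large spheres along a subsequence of radii), then using the transport identity and Fubini to conclude that almost every trajectory spends only finite total time outside each ball, and finally using that $\|f\|$ is bounded on compact annuli to convert ``finite time outside $B_\epsilon$'' into ``eventually inside $B_{2\epsilon}$''. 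None of this appears in your sketch.

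Second, even in the genuinely trapped case you yourself flag the contradiction as unresolved, and rightly so: a strictly increasing function that is bounded above is no contradiction. The repair is not a uniform lower bound on the divergence integral but an invariance observation. For the set $Z$ of points whose forward orbit never enters $B_\epsilon$ one has $\phi_t(Z)\subseteq Z$, so nonnegativity of $\rho$ gives $\int_{\phi_t(Z)}\rho\le\int_Z\rho$, while the transport identity gives the reverse inequality; hence $\int_0^\tau\int_{\phi_t(Z)}\nabla\cdot(\rho f)\,dy\,dt=0$, and since the integrand is positive almost everywhere, $\phi_t(Z)$ is a null set for almost every $t$, whence $Z$ itself is null because the $C^1$ flow maps null sets to null sets. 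Without these two repairs the argument does not close: as written, your proposal is an accurate roadmap of the proof in \cite{DensityDual}, but not a proof.
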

A function $\rho$ satisfying conditions \ref{theorem density a}, \ref{theorem density b} and \ref{theorem density c} is called a \emph{density function}. We will employ Theorem \ref{theorem density} to design a data-driven stabilizing controller. To do so, we first consider, inspired by \cite{DensityConvex}, a rational density function $\rho$, say
\begin{equation} \label{rhox}
	\rho (x) = \frac{a(x)}{b^{\alpha}(x)},
\end{equation}
where $a$, $b \in \mathbb{R}[x]$ are such that $a(x) > 0$ for all $x \in \mathbb{R}^n$ and $b(x) > 0$ for all $x \in \mathbb{R}^n \setminus \{0\}$, and $\alpha > 0$. It is then clear that $\rho$ is continuously differentiable on $\mathbb{R}^n \setminus \{0\}$, so Theorem \ref{theorem density}, item \ref{theorem density a} holds. Following \cite{DensityConvex}, we will consider a controller of the form
\begin{equation} \label{rational controller}
	K(x) = \frac{c(x)}{a(x)},
\end{equation}
where $c \in \mathbb{R}^m[x]$ is such that $c(0) = 0$. We now investigate conditions \ref{theorem density b} and \ref{theorem density c} of Theorem \ref{theorem density} in the context of data-driven stabilization. In view of the closed-loop system \eqref{closedloopsystem}, we will work with $f(x) = AF(x) + BG(x)K(x)$. Therefore, with $\rho$ in \eqref{rhox} and $K$ in \eqref{rational controller}, the function $\rho(x)f(x)/\|x\|$ in Theorem \ref{theorem density}, item \ref{theorem density b} equals
\begin{equation}\label{rhofx}
	\frac{a(x) AF(x)}{b^{\alpha}(x)\|x\|} + \frac{ BG(x)c(x)}{b^{\alpha}(x)\|x\|}.
\end{equation}
Moreover, the inequality in Theorem \ref{theorem density}, item \ref{theorem density c} boils down to
\begin{equation} \label{dual Lyapunov before vec}
	\begin{aligned}
		& b(x) \nabla \cdot \left( a(x)AF(x)+BG(x)c(x) \right)\\
		& -\alpha \frac{\partial b}{\partial x} (x) \left( a(x)AF(x)+BG(x)c(x) \right)>0.
	\end{aligned}
\end{equation}
Our goal is to find $a(x)$, $b(x)$, $c(x)$ and $\alpha$ such that the following two conditions hold for all $(A,B) \in \Sigma$:
\begin{itemize}
\item the vector \eqref{rhofx} is integrable on $\{ x \in \mathbb{R}^n \mid \|x\| \geq 1 \}$,
\item the inequality \eqref{dual Lyapunov before vec} holds for almost all $ x \in \mathbb{R}^n$.
\end{itemize}
Let $A_i$ and $B_i$ be the $i$th row of $A$ and $B$, respectively, where $i = 1,2,\dots,n$. Thus, $\text{vec}(A^{\top}) = \begin{bmatrix}
	A_1 & A_2 & \cdots & A_n
\end{bmatrix}^{\top}$, and $
\text{vec}(B^{\top}) = \begin{bmatrix}
	B_1 & B_2 & \cdots & B_n
\end{bmatrix}^{\top}$. The inequality \eqref{dual Lyapunov before vec} can be written as
$$
\begin{aligned}
&b(x) \sum_{i=1}^{n}\left( \frac{ \partial(a(x)A_i F(x)+B_i G(x) c(x))}{\partial x_i}\right)\\ 
&- \alpha \sum_{i=1}^{n}\left( \frac{\partial (b(x))}{\partial x_i} (a(x)A_i F(x)+B_i G(x) c(x)) \right) >0.
\end{aligned}
$$
Equivalently,
\begin{equation} \label{Rxv}
	R^{\top}(x) v >0,
\end{equation}
where $v$ is defined as in \eqref{Def D v}, and
\begin{equation} \label{Rx}
		R(x) := \begin{bmatrix}
			b(x) \frac{ \partial(a(x)F(x))}{\partial x_1} - \alpha \frac{\partial (b(x))}{\partial x_1} a(x)F(x)\\
			\vdots\\
			b(x) \frac{ \partial(a(x)F(x))}{\partial x_n} - \alpha \frac{\partial (b(x))}{\partial x_n} a(x)F(x)\\
			b(x) \frac{ \partial(G(x)c(x))}{\partial x_1} - \alpha \frac{\partial (b(x))}{\partial x_1} G(x)c(x) \\
			\vdots \\
			b(x) \frac{ \partial(G(x)c(x))}{\partial x_n} - \alpha \frac{\partial (b(x))}{\partial x_n} G(x)c(x)
		\end{bmatrix}.
\end{equation}
Based on the definition of the index sets $s$ and $\bar{s}$, we rewrite \eqref{Rxv} as
\begin{equation} \label{Rxv tau}
	R_{s}^{\top}(x) v_{s} + R_{\bar{s}}^{\top}(x) v_{\bar{s}} > 0.
\end{equation}
Now, we introduce a data-driven control design method based on Theorem \ref{theorem density}. To this end, we first partition
$$
N = \begin{bmatrix}
	N_{11} & N_{12} \\
	N_{21} & N_{22}
\end{bmatrix},
$$
where $N_{11} \in \mathbb{R}$. Then, we will make the following blanket assumption on the data and the polynomial matrices $F$ and $G$.
\begin{assumption} \label{assumption full row rank}
	The matrix $\begin{bmatrix}
		\mathcal{F} \\ \mathcal{G}\mathcal{U}
	\end{bmatrix}$ has full row rank.
\end{assumption}
With this assumption in place, the data matrix $\mathcal{D}$, defined as in \eqref{Def D v}, also has full row rank. This implies that $N_{22}<0$. Since there exists $v_{\bar{s}}$ satisfying the inequality \eqref{bound vsbar}, it follows from \cite{DDCQMI} that $N | N_{22} \geq 0$. Building on this, we now state the main result of this subsection.
\begin{theorem} \label{theorem control design}
Suppose that there exist $a,b,\beta \in \mathbb{R}[x]$, $c \in \mathbb{R}^m[x]$ and $\alpha,\epsilon > 0$ such that
\begin{enumerate}[label=(\roman*)]
\item \label{theorem control design i} $b(x),\beta(x)> 0$ for all $x \in \mathbb{R}^n \setminus \{ 0 \}$,
\item \label{theorem control design ii} the vectors $\frac{a(x)F(x)}{b^{\alpha}(x)\|x\|}$ and $\frac{G(x)c(x)}{b^{\alpha}(x)\|x\|}$ are integrable on $\{ x \in \mathbb{R}^n \mid \|x\| \geq 1 \}$,
\item \label{theorem control design iii} $c(0) = 0$,
\item \label{theorem control design iv} $a(x) - \epsilon   \in \sos[x]$,
\item \label{theorem control design v} the SOS constraint \eqref{theorem control design v SOS} holds, where $R(x)$ is defined as in \eqref{Rx}.
\end{enumerate}
\begin{figure*}[!t]
\normalsize
\begin{equation}\label{theorem control design v SOS}
\begin{bmatrix}
-R_{\bar{s}}^{\top}(x) N_{22}^{-1}N_{21} + R_{s}^{\top}(x) v_{s} -\beta(x) & (N|N_{22})^{\frac{1}{2}}R_{\bar{s}}^{\top}(x) \\
(N|N_{22})^{\frac{1}{2}}R_{\bar{s}}(x) & \left(R_{\bar{s}}^{\top}(x) N_{22}^{-1}N_{21} - R_{s}^{\top}(x) v_{s} \right) N_{22}
\end{bmatrix}\in \sos^{1+\ell}[x]
\end{equation}
\hrulefill
\end{figure*}
Let $K(x) = \frac{c(x)}{a(x)}$. Then, for any $(A,B) \in \Sigma$ and almost all initial conditions $x(0)\in \mathbb{R}^n$, the state trajectory $x(t)$ of the closed-loop system \eqref{closedloopsystem} tends to $0$ as $t \rightarrow \infty$.
\end{theorem}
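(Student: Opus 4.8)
The plan is to apply the dual Lyapunov theorem (Theorem~\ref{theorem density}) to the closed-loop vector field $f(x)=AF(x)+BG(x)K(x)$ with $K(x)=c(x)/a(x)$, for an arbitrary but fixed $(A,B)\in\Sigma$, using the candidate density function $\rho(x)=a(x)/b^{\alpha}(x)$. First I would dispose of the regularity requirements. By condition~\ref{theorem control design iv} we have $a(x)\geq\epsilon>0$ for all $x$, so $1/a$ is smooth and $f$ is continuously differentiable; moreover $f(0)=AF(0)+BG(0)c(0)/a(0)=0$ since $F(0)=0$ and $c(0)=0$ by condition~\ref{theorem control design iii}. Because $a>0$ everywhere and $b>0$ on $\mathbb{R}^n\setminus\{0\}$ by condition~\ref{theorem control design i}, the function $\rho$ is continuously differentiable on $\mathbb{R}^n\setminus\{0\}$, which is item~\ref{theorem density a}. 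For item~\ref{theorem density b} I would use that $\rho f/\|x\|$ is exactly the vector~\eqref{rhofx}; since $A$ and $B$ are constant, the integrability of the two vectors in condition~\ref{theorem control design ii} over $\{\|x\|\geq1\}$ immediately gives integrability of~\eqref{rhofx}.

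The core of the argument is item~\ref{theorem density c}, namely $\nabla\cdot(\rho f)(x)>0$ for almost all $x$. Writing $\psi(x):=a(x)AF(x)+BG(x)c(x)$, one computes $\nabla\cdot(\rho f)=b^{-(\alpha+1)}\bigl(b\,\nabla\cdot\psi-\alpha\frac{\partial b}{\partial x}\psi\bigr)$, and since $b^{\alpha+1}>0$ for $x\neq0$ this is positive exactly when~\eqref{dual Lyapunov before vec} holds. That inequality is the linear condition~\eqref{Rxv}, which after splitting into the known and unknown entries $v_s$ and $v_{\bar s}$ reads~\eqref{Rxv tau}. Hence item~\ref{theorem density c} reduces to showing that, for almost all $x$, the inequality $R_s^{\top}(x)v_s+R_{\bar s}^{\top}(x)v_{\bar s}>0$ holds for every $v_{\bar s}$ compatible with the data, i.e. every $v_{\bar s}$ satisfying the quadratic inequality~\eqref{bound vsbar}.

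This reduction makes the SOS constraint~\eqref{theorem control design v SOS} the decisive ingredient, and it is here that I expect the main obstacle to lie. Membership in $\sos^{1+\ell}[x]$ forces the displayed matrix, call it $M(x)$, to be positive semidefinite for every $x\in\mathbb{R}^n$. For fixed $x\neq0$ I would then invoke a specialized, multiplier-free S-lemma (the key technical result, to be established in Section~\ref{section Proofs}) tailored to the present data matrix $N$, which satisfies $N_{22}<0$ and $N\,|\,N_{22}\geq0$. The underlying geometry is that the set of $v_{\bar s}$ satisfying~\eqref{bound vsbar} is the ellipsoid centered at $-N_{22}^{-1}N_{21}$ determined by $N$, and the minimum of the affine map $v_{\bar s}\mapsto R_s^{\top}(x)v_s+R_{\bar s}^{\top}(x)v_{\bar s}$ over this ellipsoid admits a closed form; taking the Schur complement of $M(x)$ with respect to its lower-right block reproduces precisely the inequality certifying that this minimum is nonnegative, while the strictly positive slack $\beta(x)$ (condition~\ref{theorem control design i}) upgrades it to a strictly positive minimum. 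The difficulty is to prove this equivalence without a multiplier and uniformly enough in $x$ that a single matrix-SOS certificate suffices; the special block structure of $N$ induced by $\Phi$ in~\eqref{Phi} is what makes the multiplier dispensable.

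Granting this, $R_s^{\top}(x)v_s+R_{\bar s}^{\top}(x)v_{\bar s}>0$ holds for all $(A,B)\in\Sigma$ and all $x\neq0$, i.e. for almost all $x$, establishing item~\ref{theorem density c}. Theorem~\ref{theorem density} then yields that for almost all initial conditions $x(0)$ the trajectory of~\eqref{closedloopsystem} exists on $[0,\infty)$ and tends to $0$. Since $(A,B)\in\Sigma$ was arbitrary, the conclusion holds for every system compatible with the data, as claimed.
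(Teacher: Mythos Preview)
Your proposal is correct and follows essentially the same route as the paper's proof: verify the three hypotheses of Theorem~\ref{theorem density} for the closed-loop vector field with $\rho=a/b^{\alpha}$, reduce item~\ref{theorem density c} to the linear inequality~\eqref{Rxv tau} holding for every $v_{\bar s}$ in the data ellipsoid, and then use the specialized multiplier-free S-lemma (Theorem~\ref{theorem state-independent}, with $\lambda=R_{\bar s}(x)$ and $a=R_s^{\top}(x)v_s$) to deduce this from the SOS certificate~\eqref{theorem control design v SOS} together with $\beta(x)>0$. Your geometric description of the S-lemma step---minimizing an affine map over an ellipsoid and recognizing the Schur complement of $M(x)$ as exactly that minimum with slack $\beta(x)$---is precisely the content of Lemma~\ref{lemma nonstrict slemma} and Theorem~\ref{theorem state-independent} in the paper.
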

The proof of this theorem is given in Section~\ref{section Proof of Theorem 2}. Two noteworthy remarks are in order. The first remark clarifies a subtle point about stability, while the second one provides computational guidelines.
\begin{remark}
Theorem \ref{theorem control design} cannot guarantee \emph{asymptotic stability} of \eqref{closedloopsystem} for all $(A,B) \in \Sigma$. Instead, it guarantees that the trajectories of the closed-loop system, obtained from interconnecting any system in $\Sigma$ with the controller, converge to zero for \emph{almost all initial states}.
\end{remark}
\begin{remark}
Note that the conditions stated in Theorem \ref{theorem control design} are linear in $a(x)$ and $c(x)$, once the polynomials $b(x)$, $\beta(x)$ and the constants $\epsilon$, $\alpha$ are fixed. Based on this observation, one can derive a controller from Theorem \ref{theorem control design} by the following steps:
\begin{enumerate}
\item \label{step 1} Choose $b, \beta\in \mathbb{R}[x]$ such that Theorem \ref{theorem control design}, item \ref{theorem control design i} holds.
\item \label{step 2} Fix the degrees of $a(x)$ and $c(x)$.
\item \label{step 3} Choose sufficiently large $\alpha$ such that Theorem \ref{theorem control design}, item \ref{theorem control design ii} holds.
\item \label{step 4} Choose $\epsilon > 0$ and solve an SOS program subject to the constraints specified in Theorem \ref{theorem control design}, item \ref{theorem control design iii}, \ref{theorem control design iv} and \ref{theorem control design v} to compute $a \in \mathbb{R}[x]$ and $c \in \mathbb{R}^m[x]$.
\end{enumerate}
Following the clarification of the steps for obtaining a controller using Theorem \ref{theorem control design}, we now discuss considerations that guide the choices required in steps (1)-(4). In particular, the choice of $b(x)$ may require case-specific insight. We refer to examples in \cite{DensityConvex}, where various strategies are proposed in the model-based setting that offer guidance for selecting $b(x)$ in the data-driven setting. The function $\beta(x)$ can be chosen to be of the form $\hat{\beta}\|x\|^{2p}$, where $p$ is a positive integer and the constant $\hat{\beta}>0$ is sufficiently small. Additionally, choosing higher degrees for $a(x)$ and $c(x)$ enlarges the set of allowed controllers with the price of increased computational complexity. To satisfy item \ref{theorem control design iv} of Theorem \ref{theorem control design}, the degree of $a(x)$ should be even. 
\end{remark}
In the next subsection, we present methods to verify asymptotic stability of the closed-loop system for all systems compatible with the data and the prior knowledge, given the controller proposed in Theorem \ref{theorem control design}.

\subsection{Stability verification} \label{section Stability verification}
In this section, we will provide conditions under which the origin of \eqref{closedloopsystem} is asymptotically stable for all $(A,B) \in \Sigma$, given the controller $K(x)$. We start with the following definition of informativity for closed-loop stability.
\begin{definition} \label{definition 2}
    {\rm Let $K: \mathbb{R}^n \rightarrow \mathbb{R}^m$ be a continuous function such that $K(0) = 0$. The data $(\dot{\mathcal{X}},\mathcal{X},\mathcal{U})$ are called} informative for closed-loop stability {\rm with respect to $K$ if there exists a radially unbounded $V \in \mathcal{V}$ such that \eqref{oridotVx} holds for all $(A,B) \in \Sigma$.}
\end{definition}
It is clear that if the data $(\dot{\mathcal{X}}, \mathcal{X}, \mathcal{U})$ are informative for closed-loop stability with respect to $K$, they are informative for stabilization. The idea is to construct a candidate controller $K$ using Theorem \ref{theorem control design}, after which we will verify informativity for closed-loop stability with respect to this $K$. To introduce our verification method, we first reformulate the inequality \eqref{oridotVx} in terms of $v_{\bar{s}}$. By vectorizing $A$ and $B$, we rewrite \eqref{oridotVx} as
\begin{equation}
	L^{\top}(x) v >0\quad \forall x\in \mathbb{R}^n \setminus\{ 0\},
\end{equation}
where $v$ is defined as in \eqref{Def D v}, and
\begin{equation} \label{Lx}
	L(x) := \begin{bmatrix}
		-I \otimes F(x) \\ -I \otimes G(x)K(x)
	\end{bmatrix}\frac{\partial V}{\partial x}^{\top}(x).
\end{equation}
Building on the definition of $s$ and $\bar{s}$, we have
\begin{equation} \label{Lxv tau}
	L_{s}^{\top}(x) v_{s} + L_{\bar{s}}^{\top}(x) v_{\bar{s}} > 0 \quad \forall x\in \mathbb{R}^n \setminus\{ 0\}.
\end{equation}
Then, investigating informativity for closed-loop stability boils down to providing conditions under which all vectors $v_{\bar{s}}$ satisfying \eqref{bound vsbar} also satisfy \eqref{Lxv tau}. Based on this, we now present necessary and sufficient conditions under which the data are informative for closed-loop stability.

\begin{theorem} \label{theorem DDC strict}
    Let $K: \mathbb{R}^n \rightarrow \mathbb{R}^m$ be a continuous function such that $K(0) = 0$. The data $(\dot{\mathcal{X}},\mathcal{X},\mathcal{U})$ are informative for closed-loop stability if and only if there exist a radially unbounded $V \in \mathcal{V}$ and a $\beta: \mathbb{R}^n \rightarrow \mathbb{R}$ such that for all $x\in \mathbb{R}^n \setminus\{ 0\}$, $\beta(x) > 0$, and \eqref{theorem DDC strict b matrix inequality} holds, where $L(x)$ is defined as in \eqref{Lx}.
    \begin{figure*}[!t]
    \normalsize
    \begin{equation} \label{theorem DDC strict b matrix inequality}
            \begin{bmatrix}
                -L_{\bar{s}}^{\top}(x)N_{22}^{-1}N_{21} + L_{s}^{\top}(x) v_{s} - \beta(x) & (N|N_{22})^{\frac{1}{2}}L_{\bar{s}}^{\top}(x) \\
				(N|N_{22})^{\frac{1}{2}}L_{\bar{s}}(x) & \left(L_{\bar{s}}^{\top}(x)N_{22}^{-1}N_{21} - L_{s}^{\top}(x) v_{s}\right)N_{22}
		\end{bmatrix} \geq 0
	\end{equation}
    \hrulefill
    \end{figure*}
    
    If the latter conditions hold, then the origin of the closed-loop system $\dot{x} = AF(x) + BG(x)K(x)$ is globally asymptotically stable for all $(A,B) \in \Sigma$. Moreover, the data $(\dot{\mathcal{X}},\mathcal{X},\mathcal{U})$ are informative for stabilization.
\end{theorem}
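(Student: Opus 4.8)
The plan is to reduce the quantifier ``for all $(A,B)\in\Sigma$'' to a family of robustness problems indexed by $x$, each of the form ``a quadratic inequality implies a linear inequality,'' and then to dispatch each of these by the multiplier-free S-lemma announced in the introduction. Concretely, I would fix a candidate $V$ and use the characterization from Section~\ref{section Alternative description of the set}, namely that $(A,B)\in\Sigma$ is equivalent to the corresponding $v_{\bar{s}}$ satisfying \eqref{bound vsbar}. Combined with the reformulation \eqref{Lxv tau} of \eqref{oridotVx}, this shows that \eqref{oridotVx} holds for all $(A,B)\in\Sigma$ if and only if, for every $x\in\mathbb{R}^n\setminus\{0\}$, the linear inequality $L_{s}^{\top}(x)v_{s}+L_{\bar{s}}^{\top}(x)v_{\bar{s}}>0$ holds for every $v_{\bar{s}}$ satisfying \eqref{bound vsbar}. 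Thus the problem decouples over $x$ into pointwise questions involving only the matrix $N$ and the $x$-dependent quantities $L_{s}^{\top}(x)v_{s}$ and $L_{\bar{s}}(x)$.

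Second, I would apply the new S-lemma to each such pointwise question. The relevant structural facts are those recorded just before Theorem~\ref{theorem control design}: Assumption~\ref{assumption full row rank} forces $N_{22}<0$, and feasibility of \eqref{bound vsbar} (guaranteed because $\Sigma$ contains the true system) gives $N|N_{22}\geq 0$. Since $N_{22}<0$, the set of admissible $v_{\bar{s}}$ is a solid ellipsoid, and for matrices of exactly this type the S-lemma certifies that the linear form is strictly positive over the whole ellipsoid if and only if there is a scalar $\beta(x)>0$ rendering the $(1+\ell)\times(1+\ell)$ matrix in \eqref{theorem DDC strict b matrix inequality} positive semidefinite. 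Mechanically, a Schur complement of that matrix with respect to its $(2,2)$ block, which equals a positive scalar times $-N_{22}$, reproduces exactly the condition that the minimum of the linear form over the ellipsoid exceeds zero. Collecting the pointwise choices of $\beta(x)$ into a single function $\beta:\mathbb{R}^n\to\mathbb{R}$ -- for which no regularity is required -- yields both implications of the stated equivalence.

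Third, for the ``Moreover'' claims I would argue as follows. Once \eqref{oridotVx} holds for all $(A,B)\in\Sigma$, the fixed $V\in\mathcal{V}$ is a common strict Lyapunov function: it is positive definite, radially unbounded, vanishes at the origin, and satisfies $\dot V(x)=\frac{\partial V}{\partial x}(x)\bigl(AF(x)+BG(x)K(x)\bigr)<0$ for all $x\neq 0$ and every $(A,B)\in\Sigma$. Standard Lyapunov theory then gives global asymptotic stability of the origin of \eqref{closedloopsystem} for each $(A,B)\in\Sigma$. Informativity for stabilization is immediate, since the same $V$ together with the given $K$ (with $K(0)=0$) satisfies Definition~\ref{definition 1}, as already observed after Definition~\ref{definition 2}.

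The main obstacle is the S-lemma step itself: showing that for the structured matrix $N$ (with $N_{22}<0$ and $N|N_{22}\geq 0$) the quadratic-implies-linear certificate can be written \emph{without} a multiplier, in the precise matrix form \eqref{theorem DDC strict b matrix inequality}. The necessity direction is the delicate part, as one must prove that strict positivity of the linear form over the entire ellipsoid is \emph{always} certifiable by a single positive semidefinite matrix of the prescribed shape, rather than only by the classical multiplier-based S-procedure; the degenerate case $N|N_{22}=0$, where the ellipsoid collapses, also warrants care. This multiplier-free characterization is exactly the new technical lemma deferred to Section~\ref{section Proofs}, so the proof of Theorem~\ref{theorem DDC strict} ultimately reduces to invoking that lemma pointwise in $x$.
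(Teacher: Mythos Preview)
Your proposal is correct and follows essentially the same approach as the paper: reduce informativity for closed-loop stability to the pointwise condition that \eqref{Lxv tau} holds for all $v_{\bar{s}}$ satisfying \eqref{bound vsbar}, then invoke the multiplier-free S-lemma (Theorem~\ref{theorem state-independent}) with $\lambda=L_{\bar{s}}(x)$ and $a=L_s^{\top}(x)v_s$ for each fixed $x\neq 0$, using $N\in\mathbf{\Pi}_{1,\ell}$. The paper's proof is exactly this, and your identification of the S-lemma as the deferred technical core is accurate.
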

The proof of Theorem \ref{theorem DDC strict} is given in Section~\ref{section Proof of Theorem 3}. This relies on a specialized S-lemma that provides conditions under which a quadratic inequality implies a linear one. Equivalent conditions for informativity for closed-loop stability can be obtained by applying the classical S-lemma. For the sake of completeness, we will also state such conditions in Proposition \ref{proposition comparison}. However, as we will explain shortly, Proposition \ref{proposition comparison} suffers from the drawback of introducing an additional state-dependent multiplier $\gamma(x)$ which is not present in Theorem \ref{theorem DDC strict}.

\begin{proposition} \label{proposition comparison}
    Let $K: \mathbb{R}^n \rightarrow \mathbb{R}^m$ be continuous such that $K(0) = 0$. The data $(\dot{\mathcal{X}},\mathcal{X},\mathcal{U})$ are informative for closed-loop stability if and only if there exist a radially unbounded $V \in \mathcal{V}$ and functions $\gamma,\ \beta: \mathbb{R}^n \rightarrow \mathbb{R}$ such that for all $x \in \mathbb{R}^n \setminus \{ 0\}$, $\gamma(x) \geq 0$, $\beta (x) > 0$ and
    \begin{equation} \label{theorem DDC strict b1 matrix inequality}
	\begin{bmatrix}
            2L_{s}^{\top}(x) v_{s} & L_{\bar{s}}^{\top}(x) \\ L_{\bar{s}}(x) & 0
	\end{bmatrix} - \gamma (x) N \geq
        \begin{bmatrix}
		\beta(x)  & 0 \\ 0 & 0
	\end{bmatrix},
    \end{equation}
    where $L(x)$ is defined as in \eqref{Lx}.
\end{proposition}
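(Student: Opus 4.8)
The plan is to read Proposition~\ref{proposition comparison} as the pointwise-in-$x$ application of the classical S-lemma \cite{SurveySlemma}, playing the same role that the specialized S-lemma plays for Theorem~\ref{theorem DDC strict}. The starting point is the reformulation already established before the statement: by Definition~\ref{definition 2} and the vectorization leading to \eqref{Lxv tau}, the data are informative for closed-loop stability if and only if there is a radially unbounded $V \in \mathcal{V}$ such that, for every fixed $x \in \mathbb{R}^n \setminus \{0\}$, the affine inequality $\phi_x(v_{\bar{s}}) := L_s^{\top}(x) v_s + L_{\bar{s}}^{\top}(x) v_{\bar{s}} > 0$ holds for all $v_{\bar{s}}$ satisfying \eqref{bound vsbar}. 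The key structural fact is that $N$ does not depend on $x$ while $L(x)$ does, so I would fix a candidate $V$ first and then treat the condition as a family, indexed by $x$, of ``one quadratic inequality implies one affine inequality'' problems in the single variable $v_{\bar{s}}$. Writing $z = \begin{bmatrix} 1 & v_{\bar{s}}^{\top} \end{bmatrix}^{\top}$ and $M(x) = \begin{bmatrix} 2L_s^{\top}(x) v_s & L_{\bar{s}}^{\top}(x) \\ L_{\bar{s}}(x) & 0 \end{bmatrix}$, one has $z^{\top} M(x) z = 2\phi_x(v_{\bar{s}})$, while $z^{\top} N z$ is the left-hand side of \eqref{bound vsbar}.

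For the \emph{if} direction I would argue directly. Assume \eqref{theorem DDC strict b1 matrix inequality} at a fixed $x$ with $\gamma(x) \geq 0$ and $\beta(x) > 0$. Left- and right-multiplying \eqref{theorem DDC strict b1 matrix inequality} by $z^{\top}$ and $z$ gives $2\phi_x(v_{\bar{s}}) - \gamma(x)\, z^{\top} N z \geq \beta(x)$. For any $v_{\bar{s}}$ satisfying \eqref{bound vsbar}, the term $\gamma(x)\, z^{\top} N z$ is nonnegative, hence $\phi_x(v_{\bar{s}}) \geq \tfrac{1}{2}\beta(x) > 0$, which is \eqref{Lxv tau}. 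Since this holds for all $x \neq 0$, the fixed $V$ certifies \eqref{oridotVx} for all $(A,B) \in \Sigma$, i.e.\ informativity for closed-loop stability.

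For the \emph{only if} direction I would invoke the classical S-lemma. Fix $x$ and the certifying $V$. Because $N_{22} < 0$ and $N | N_{22} \geq 0$, the set of $v_{\bar{s}}$ satisfying \eqref{bound vsbar} is a nonempty compact ellipsoid on which $z^{\top} N z$ is concave. As $\phi_x$ is affine and positive on this compact set, it attains a minimum $\beta_0 > 0$ there, so $\phi_x - \beta_0 \geq 0$ on the set. Applying the single-constraint S-lemma to $z^{\top} N z \geq 0$ and $\phi_x - \beta_0$ produces a multiplier $\gamma' \geq 0$ with $\phi_x(v_{\bar{s}}) - \gamma'\, z^{\top} N z \geq \beta_0$ for all $v_{\bar{s}}$. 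Homogenizing this inhomogeneous quadratic inequality in $z$ (equivalently, completing the square in $v_{\bar{s}}$) is equivalent to $M(x) - 2\gamma' N \geq \begin{bmatrix} 2\beta_0 & 0 \\ 0 & 0 \end{bmatrix}$, which is exactly \eqref{theorem DDC strict b1 matrix inequality} with $\gamma(x) = 2\gamma'$ and $\beta(x) = 2\beta_0$. Defining $\gamma, \beta : \mathbb{R}^n \to \mathbb{R}$ by these pointwise values finishes the equivalence; notably no regularity of $\gamma, \beta$ is required, since the proposition only asks for their existence.

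The step I expect to be the main obstacle is securing the regularity hypothesis of the S-lemma needed for the \emph{only if} direction, namely a Slater point for the single quadratic constraint. Strict feasibility of \eqref{bound vsbar} is equivalent to $N | N_{22} > 0$; in the degenerate case $N | N_{22} = 0$ the ellipsoid collapses to a single point and Slater fails. I would treat this boundary case separately, either by constructing $(\gamma(x), \beta(x))$ directly at such $x$, or by a limiting argument exploiting that the S-procedure for a single regular concave quadratic remains lossless. The only other delicate bookkeeping is the interplay between the strict inequality in \eqref{Lxv tau} and the matrix inequality \eqref{theorem DDC strict b1 matrix inequality}: the positive slack $\beta(x)$ is precisely the device encoding strictness, and the factor $2$ relating $\phi_x$ to $z^{\top} M(x) z$ must be tracked so that the resulting multiplier and slack match the normalization in the statement.
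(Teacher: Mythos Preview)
Your plan is correct, and the ``if'' direction matches the paper's proof exactly. The ``only if'' direction, however, takes a genuinely different route. You appeal to the classical S-lemma pointwise in $x$: compactness of $\mathcal{Z}_\ell(N)$ gives a positive slack $\beta_0$, then the S-procedure yields a multiplier $\gamma'$, and homogenization produces \eqref{theorem DDC strict b1 matrix inequality}. The paper instead leverages Theorem~\ref{theorem DDC strict} (hence the specialized S-lemma) to obtain \eqref{theorem DDC strict b matrix inequality}, and from that matrix inequality constructs the multiplier \emph{explicitly} as
\[
\gamma(x) = \frac{L_{\bar{s}}^{\top}(x)N_{22}^{-1}L_{\bar{s}}(x)}{L_{\bar{s}}^{\top}(x)N_{22}^{-1}N_{21} - L_{s}^{\top}(x) v_{s}},
\]
and then verifies \eqref{theorem DDC strict b1 matrix inequality} by a Schur-complement computation, treating the cases $L_{\bar s}(x)=0$ and $L_{\bar s}(x)\neq 0$ separately. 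The trade-off is this: your argument is self-contained and does not rely on Theorem~\ref{theorem DDC strict}, but it needs the Slater point $N|N_{22}>0$, and you correctly flag that the degenerate case $N|N_{22}=0$ must be handled separately (note that this is a single global condition on $N$, not $x$-dependent; in that case $\mathcal{Z}_\ell(N)$ is a singleton and one can take $\gamma(x)$ large and $\beta(x)$ small directly). The paper's explicit formula sidesteps Slater entirely and works uniformly, at the cost of depending on the earlier theorem. Both routes are valid; the paper's choice also reinforces its narrative that the multiplier, while it exists, is redundant once the specialized S-lemma is available.
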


The proof of Proposition \ref{proposition comparison} is given in Section \ref{section Proof of Proposition 4}. Compared to the equivalent conditions presented in Theorem \ref{theorem DDC strict}, the main difference in Proposition \ref{proposition comparison} is the introduction of an additional state-dependent multiplier $\gamma(x)$. This is important from a computational point of view. In fact, in order to verify the inequalities \eqref{theorem DDC strict b matrix inequality} and \eqref{theorem DDC strict b1 matrix inequality}, we rely on SOS programming. To this end, all entries of the involved matrices are required to be polynomial. In particular, $\gamma(x)$ has to be parameterized as a polynomial, which introduces additional coefficients to be determined and increases the computational complexity. However, Theorem \ref{theorem DDC strict} bypasses the parameterization of the multiplier, demonstrating that this step is not necessary. This leads to a clear computational benefit. For this reason, our method avoids introducing a multiplier $\gamma(x)$ and works with Theorem \ref{theorem DDC strict} instead.

\subsection{Computational approaches} \label{section Computational approaches}
In the previous subsection, we have provided conditions under which the data are informative for closed-loop stability. These conditions involve checking the inequality \eqref{theorem DDC strict b matrix inequality} for all $x \in \mathbb{R}^n\setminus\{0\}$. In this subsection, we present a more computationally tractable SOS program that implies the feasibility of \eqref{theorem DDC strict b matrix inequality}. To cover the controllers obtained using Theorem \ref{theorem control design}, throughout this subsection, we focus on rational controllers with a positive denominator. In particular, let $K(x) = \frac{c(x)}{a(x)}$, where $a \in \mathbb{R}[x]$ and $c \in \mathbb{R}^m [x]$ satisfy $a(x) > 0$ for all $x \in \mathbb{R}^n$ and $c(0) = 0$. 
\subsubsection{Application of SOS relaxation} \label{section Application of SOS relaxation}
To apply SOS relaxation, we require all entries of the involved matrix to be polynomial. However, since $K(x)$ is rational, the entries of $L(x)$ are in general not polynomial. To address this, we simply multiply the inequality \eqref{theorem DDC strict b matrix inequality} from both sides by $a(x)$. Note that since $a(x) > 0$ for all $x \in \mathbb{R}^n$, this does not change the inequality. However, it has the benefit that we can work with the vector 
$$
    a(x)L(x) = \begin{bmatrix}
        -I \otimes a(x)F(x) \\ -I \otimes G(x)c(x)
    \end{bmatrix}\frac{\partial V}{\partial x}^{\top}(x),
$$
which is, in contrast to $L(x)$, a member of $\mathbb{R}^{n(f+g)}[x]$. Next, we present a computationally tractable approach to verify data informativity for closed-loop stability.
\begin{corollary} \label{corollary global}
    Let $a \in \mathbb{R}[x]$ and $c \in \mathbb{R}^m[x]$, where $a(x) > 0$ for all $x\in \mathbb{R}^n$ and $c(0) = 0$. Consider the controller $K(x) = \frac{c(x)}{a(x)}$. The data $(\dot{\mathcal{X}},\mathcal{X},\mathcal{U})$ are informative for closed-loop stability with respect to $K$ if there exist $\epsilon,\ \beta,\ V \in \mathbb{R}[x]$ such that
    \begin{enumerate}[label=(\roman*)]
	\item \label{corollary global a} $V(0) = 0$,
        \item \label{corollary global b} $\epsilon(x) > 0$ and $ \beta(x) > 0$ for all $x\in \mathbb{R}^n \setminus\{ 0\}$, and $\epsilon(x)$ is radially unbounded,
	\item \label{corollary global c} $V(x) - \epsilon(x) \in  \sos[x]$,
	\item \label{corollary global d} the SOS constraint \eqref{corollary global d SOS} holds, where $L(x)$ is defined as in \eqref{Lx}. 
    \end{enumerate}
    \begin{figure*}[!t]
    \normalsize
    \begin{equation}\label{corollary global d SOS}
        \begin{bmatrix}
            -a(x)L_{\bar{s}}^{\top}(x)N_{22}^{-1}N_{21} + a(x)L_{s}^{\top}(x) v_{s} - \beta(x) & a(x)(N|N_{22})^{\frac{1}{2}}L_{\bar{s}}^{\top}(x) \\
            a(x)(N|N_{22})^{\frac{1}{2}}L_{\bar{s}}(x) & a(x)\left(L_{\bar{s}}^{\top}(x)N_{22}^{-1}N_{21} - L_{s}^{\top}(x) v_{s}\right)N_{22}
        \end{bmatrix}  \in  \sos^{1+\ell}[x]
    \end{equation}
    \hrulefill
    \end{figure*}
\end{corollary}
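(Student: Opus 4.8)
The plan is to show that the SOS conditions of the corollary imply the hypotheses of Theorem \ref{theorem DDC strict}, so that informativity for closed-loop stability with respect to $K$ follows directly. I would split the argument into two independent parts: establishing that $V$ is an admissible, radially unbounded Lyapunov candidate, and establishing the matrix inequality \eqref{theorem DDC strict b matrix inequality}.

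First I would verify that $V \in \mathcal{V}$ and that $V$ is radially unbounded. Condition \ref{corollary global a} gives $V(0) = 0$. Since $V - \epsilon \in \sos[x]$ by \ref{corollary global c}, we have $V(x) \geq \epsilon(x)$ for all $x$; combined with $\epsilon(x) > 0$ for $x \neq 0$ from \ref{corollary global b}, this yields $V(x) > 0$ for all $x \neq 0$, hence $V \in \mathcal{V}$. Radial unboundedness of $V$ then follows from $V(x) \geq \epsilon(x)$ together with the radial unboundedness of $\epsilon$.

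The crux is to relate the SOS matrix \eqref{corollary global d SOS} to the matrix inequality \eqref{theorem DDC strict b matrix inequality}. The key observation is that, because $a(x) > 0$ everywhere and $a(x)L(x)$ is polynomial (the rational factor $K = c/a$ appears only in the block $-I \otimes G(x)K(x)$, which becomes $-I \otimes G(x)c(x)$ after multiplication by $a$), the matrix in \eqref{corollary global d SOS} equals $a(x)$ times the matrix in \eqref{theorem DDC strict b matrix inequality} with $\beta(x)$ replaced by $\hat\beta(x) := \beta(x)/a(x)$. Indeed, every entry of \eqref{theorem DDC strict b matrix inequality} is scaled by $a(x)$ in \eqref{corollary global d SOS} except the additive term $-\beta(x)$ in the $(1,1)$ entry, and scaling that multiplier by $1/a$ before multiplying the whole matrix by $a$ reproduces precisely the unscaled $-\beta(x)$. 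Since membership in $\sos^{1+\ell}[x]$ implies pointwise positive semidefiniteness, \eqref{corollary global d SOS} shows that this matrix is positive semidefinite for all $x$; dividing by $a(x) > 0$ then shows that \eqref{theorem DDC strict b matrix inequality}, with multiplier $\hat\beta$, holds for all $x$, in particular for all $x \neq 0$. Finally, $\hat\beta(x) = \beta(x)/a(x) > 0$ for $x \neq 0$ because $\beta(x) > 0$ there and $a(x) > 0$. With a radially unbounded $V \in \mathcal{V}$ and such a $\hat\beta$ in hand, Theorem \ref{theorem DDC strict} applies and yields the claim.

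The step I expect to require the most care is the bookkeeping in the previous paragraph: verifying the exact correspondence between \eqref{corollary global d SOS} and \eqref{theorem DDC strict b matrix inequality} up to the scalar factor $a(x)$, and in particular tracking that $\beta$ is deliberately left unscaled so that the induced multiplier is $\beta/a$ rather than $\beta$. The remaining implications, namely that an SOS matrix is pointwise positive semidefinite and that multiplication by a strictly positive scalar preserves positive semidefiniteness, are immediate from the definition of $\sos^{1+\ell}[x]$ and require no further argument.
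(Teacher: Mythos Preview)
Your proposal is correct and follows essentially the same route as the paper's proof: verify $V \in \mathcal{V}$ is radially unbounded from conditions \ref{corollary global a}--\ref{corollary global c}, then divide the SOS matrix \eqref{corollary global d SOS} by $a(x)>0$ to recover \eqref{theorem DDC strict b matrix inequality} and invoke Theorem~\ref{theorem DDC strict}. You are actually more explicit than the paper about the bookkeeping that the induced multiplier is $\hat\beta = \beta/a$ rather than $\beta$; the paper glosses over this in the proof of Corollary~\ref{corollary global} (though it does spell it out in the analogous proof of Corollary~\ref{corollary alternative}).
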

The proof of Corollary \ref{corollary global} is given in Section \ref{section Proof of Corollary global}. This corollary provides a computational approach to find a common Lyapunov function for all closed-loop systems $\dot{x} = AF(x) + BG(x)K(x)$ compatible with the data and the prior knowledge.

\subsubsection{Reduction of computational complexity} \label{section Reduction of computational complexity}
In this section, we introduce an alternative computational approach to verify closed-loop stability, resulting in matrices with reduced size. In Corollary \ref{corollary global}, item \ref{corollary global d}, the size of the matrix is $(1+ \ell)\times(1+\ell)$, which is independent of the time horizon $T$ of the experiment. However, this size can still be large, as $\ell$ represents the number of unknown entries of $A_{\textnormal{true}}$ and $B_{\textnormal{true}}$, which can be as large as $n(f+g)$. Recognizing this limitation, we propose an alternative approach, where the matrix size depends only on the state-space dimension. To achieve this, we first reformulate the matrix inequality \eqref{theorem DDC strict b matrix inequality}. Define
\begin{equation} \label{Qx}
    Q(x):=\begin{bmatrix}
		-I \otimes a(x)F(x) \\ -I \otimes G(x)c(x)
    \end{bmatrix}.
\end{equation}
Given $L(x)$, defined as in \eqref{Lx}, we have 
$$
    a(x)L_{\bar{s}}(x) =  Q_{\bar{s}\bullet}(x)\frac{\partial V}{\partial x}^{\top}(x).
$$
Since $N_{22} < 0$ and $a(x) > 0$ for all $x\in \mathbb{R}^n$, by using a Schur complement argument, the matrix inequality \eqref{theorem DDC strict b matrix inequality} is equivalent to \eqref{eq form 1}.
\begin{figure*}[!t]
    \normalsize
    \begin{equation} \label{eq form 1}
	\begin{bmatrix}
		-L_{\bar{s}}^{\top}(x)N_{22}^{-1}N_{21} + L_{s}^{\top}(x) v_{s} - \beta(x) & (N|N_{22})^{\frac{1}{2}}L_{\bar{s}}^{\top}(x)N_{22}^{-1}Q_{\bar{s} \bullet}(x) \\
		(N|N_{22})^{\frac{1}{2}}Q_{\bar{s} \bullet}^{\top}(x)N_{22}^{-1}L_{\bar{s}}(x) & \left(L_{\bar{s}}^{\top}(x)N_{22}^{-1}N_{21} - L_{s}^{\top}(x) v_{s}\right)Q_{\bar{s} \bullet}^{\top}(x)N_{22}^{-1}Q_{\bar{s} \bullet}(x)
	\end{bmatrix}
	 \geq 0.
    \end{equation}
    \hrulefill
\end{figure*}
Clearly, given a controller $K$, this matrix inequality remains linear in the decision variables. We observe that the size of the involved matrix is $(1+n) \times (1+n)$, which depends only on the state-space dimension. Building on this, we will construct new SOS constraints. Note that although the given $K$ is rational, the entries of $a(x)L(x)$ and $Q(x)$ are polynomial when $V \in \mathcal{V}$. The following corollary provides the computational approach corresponding to \eqref{eq form 1}.
\begin{figure}[t]
    \begin{center}
        \includegraphics[trim=40 30 50 60, clip, width=0.95\columnwidth]{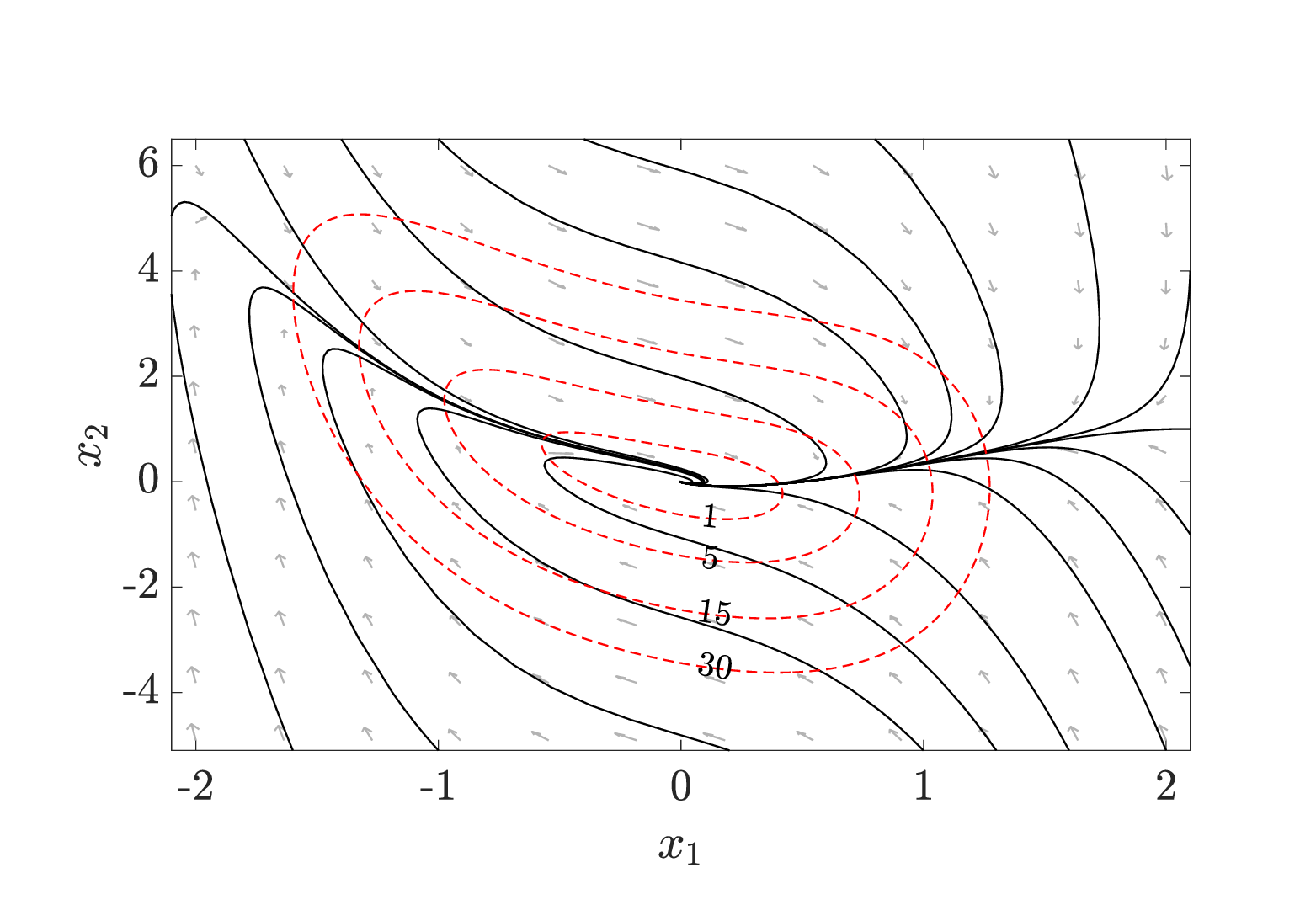}
        \caption{The phase portrait of $\dot{x}=A_{\textnormal{true}}F(x)+ B_{\textnormal{true}}G(x) K(x)$ and the level sets of $V(x)$. The gray arrows show the closed-loop vector field and the black lines are trajectories starting from the edges and converging to the origin. The level sets of the obtained $V(x)$ are represented by red dashed lines.}
	\label{fig Ex2}
    \end{center}
\end{figure}
\begin{corollary} \label{corollary alternative}
    Let $a \in \mathbb{R}[x]$ and $c \in \mathbb{R}^m[x]$, where $a(x) > 0$ for all $x\in \mathbb{R}^n$ and $c(0) = 0$. Consider the controller $K(x) = \frac{c(x)}{a(x)}$. The data $(\dot{\mathcal{X}},\mathcal{X},\mathcal{U})$ are informative for closed-loop stability with respect to $K$ if there exist $\epsilon,\ \beta,\ V \in \mathbb{R}[x]$ such that conditions \ref{corollary global a}, \ref{corollary global b} and \ref{corollary global c} in Corollary \ref{corollary global} hold, and the SOS constraint \eqref{corollary alternative d SOS} is satisfied, where $L(x)$ and $Q(x)$ are defined as in \eqref{Lx} and \eqref{Qx}, respectively. 
    \begin{figure*}[!t]
    \normalsize
    \begin{equation} \label{corollary alternative d SOS}
	\begin{bmatrix}
            -a(x)(L_{\bar{s}}^{\top}(x)N_{22}^{-1}N_{21} + L_{s}^{\top}(x) v_{s}) - \beta(x) & a(x)(N|N_{22})^{\frac{1}{2}}L_{\bar{s}}^{\top}(x)N_{22}^{-1}Q_{\bullet \bar{s}}^{\top}(x) \\
            a(x)(N|N_{22})^{\frac{1}{2}}Q_{\bullet \bar{s}}(x)N_{22}^{-1}L_{\bar{s}}(x) & a(x)\left(L_{\bar{s}}^{\top}(x)N_{22}^{-1}N_{21} - L_{s}^{\top}(x) v_{s}\right)Q_{\bullet \bar{s}}(x)N_{22}^{-1}Q_{\bullet \bar{s}}^{\top}(x)
	\end{bmatrix}  \in \sos^{1+n}[x],
    \end{equation}
    \hrulefill
    \end{figure*}
\end{corollary}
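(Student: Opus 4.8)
The plan is to reduce the hypotheses of Corollary~\ref{corollary alternative}, after clearing the positive denominator $a(x)$, to exactly the conditions of Theorem~\ref{theorem DDC strict}, from which informativity for closed-loop stability follows at once. First I would check that $V$ is an admissible Lyapunov candidate: item~\ref{corollary global a} gives $V(0)=0$, item~\ref{corollary global c} gives $V-\epsilon\in\sos[x]$ and hence $V(x)\ge\epsilon(x)$, while item~\ref{corollary global b} supplies $\epsilon(x)>0$ for $x\ne0$ together with radial unboundedness of $\epsilon$. Combining these yields $V(x)>0$ for $x\ne 0$ and $V$ radially unbounded, so $V\in\mathcal V$. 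This step is routine and identical to the corresponding one in Corollary~\ref{corollary global}.

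Next I would exploit the factorization forced by $K=c/a$, namely $L(x)=\tfrac{1}{a(x)}Q(x)\tfrac{\partial V}{\partial x}^{\top}(x)$, and in particular $a(x)L_{\bar s}(x)=Q_{\bar s\bullet}(x)\tfrac{\partial V}{\partial x}^{\top}(x)$, which renders every block polynomial once it is multiplied by $a(x)$. With this, the matrix in \eqref{corollary alternative d SOS} coincides with $a(x)$ times the matrix in \eqref{eq form 1}, the only bookkeeping point being that the bare scalar $\beta(x)$ in \eqref{corollary alternative d SOS} represents $a(x)\tilde\beta(x)$ with $\tilde\beta:=\beta/a$. Since a member of $\sos^{1+n}[x]$ is positive semidefinite for every $x$, dividing by $a(x)>0$ shows that \eqref{eq form 1} holds for all $x\ne0$ with $\tilde\beta$ in place of $\beta$; moreover $\beta(x)>0$ and $a(x)>0$ give $\tilde\beta(x)>0$ for $x\ne0$.

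I would then invoke the equivalence \eqref{eq form 1}$\Leftrightarrow$\eqref{theorem DDC strict b matrix inequality} established just before the corollary (via the Schur complement, using $N_{22}<0$ and $a(x)>0$). This delivers \eqref{theorem DDC strict b matrix inequality} for all $x\ne0$ together with the pair $(V,\tilde\beta)$, so every hypothesis of Theorem~\ref{theorem DDC strict} is met. By that theorem the data are informative for closed-loop stability with respect to $K$, and the origin of $\dot x=AF(x)+BG(x)K(x)$ is globally asymptotically stable for all $(A,B)\in\Sigma$.

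The most delicate point is the converse direction of that equivalence, i.e.\ that \eqref{eq form 1} implies \eqref{theorem DDC strict b matrix inequality}: here the reduction in size from $1+\ell$ to $1+n$ must be undone, which is not a consequence of a congruence alone. It works precisely because $L_{\bar s}$ factors through $Q_{\bar s\bullet}$ via $a L_{\bar s}=Q_{\bar s\bullet}\tfrac{\partial V}{\partial x}^{\top}$, so that the off-diagonal block of \eqref{eq form 1} lies in the range of its lower-right block and the corresponding (generalized) Schur complements reduce to the same scalar inequality. Once that equivalence is in place, the remainder of the proof is only the SOS-to-PSD relaxation and the positivity bookkeeping for $\tilde\beta$, exactly as in Corollary~\ref{corollary global}.
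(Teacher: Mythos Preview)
Your proposal is correct and follows essentially the same route as the paper: verify $V\in\mathcal V$ and radially unbounded from items \ref{corollary global a}--\ref{corollary global c}, divide the SOS matrix by $a(x)>0$ to recover \eqref{eq form 1} with $\tilde\beta=\beta/a$, and then invoke the Schur-complement equivalence between \eqref{eq form 1} and \eqref{theorem DDC strict b matrix inequality} together with Theorem~\ref{theorem DDC strict}. Your extra paragraph on why the $(1+n)\to(1+\ell)$ direction of that equivalence goes through is a nice clarification, but the paper simply cites the equivalence as already established.
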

The proof of Corollary \ref{corollary alternative} is given in Section~\ref{section Proof of Corollary alternative}. Compared to Corollary \ref{corollary global}, the size of the SOS constraint in Corollary \ref{corollary alternative} depends on the state-space dimension $n$, rather than the number of unknown parameters $\ell$.

\section{Illustrative examples} \label{section Illustrative examples}
In this section, we illustrate Theorem~\ref{theorem control design}, Corollary~\ref{corollary global} and Corollary~\ref{corollary alternative} with three examples. In Example~\ref{example 1}, we revisit the system~\eqref{example in CDC} using data without noise, while in Example~\ref{example 2}, the system~\eqref{example moti sim} is studied with noisy data. Additionally, a mathematical model of a DC motor with cubic damping is examined in Example~\ref{example DC}.

\begin{example} \label{example 1}
In this example, we again consider the system \eqref{example in CDC}. After taking $F(x) = \begin{bmatrix}
x_2 & x_1^2 
\end{bmatrix}^{\top}$ and $G(x) = 1$, the matrices $A_{\textnormal{true}}$ and $B_{\textnormal{true}}$ are given by
$$ 
A_{\textnormal{true}} = \begin{bmatrix}
1 & 0 \\
0 & 1
\end{bmatrix}\quad \text{and} \quad
B_{\textnormal{true}}= \begin{bmatrix}
0\\1
\end{bmatrix}.
$$
Let $x(0) = \begin{bmatrix}
1 & -1
\end{bmatrix}^{\top}$ and $u(t)= -\sin(2t)+\cos(t)$ for $t \in [0,3]$. We collect $T=3$ data samples from the system at the time instants $t_0 = 0.4$, $t_1 = 0.6$ and $t_2 = 0.8$. During the experiment, we assume that the noise samples are zero, and therefore, $\Phi_{11}=0$. The data matrices are
$$
\begin{aligned}
	\dot{\mathcal{X}} = \begin{bmatrix}
		-0.4750 &  -0.3748 &  -0.3452\\
		0.7249 &   0.3008  &  0.0190
	\end{bmatrix}, 
\end{aligned}
$$
$$
\begin{aligned}
	\mathcal{X} &= \begin{bmatrix}
		0.7219 & 0.6384 & 0.5673\\
		-0.4750 &  -0.3748 &  -0.3452
	\end{bmatrix},\\
	\mathcal{U} &= \begin{bmatrix}
		0.2037 & 0 & 0 \\ 
        0 & -0.1067 & 0 \\ 
        0 & 0 & -0.3029
	\end{bmatrix}.
\end{aligned}
$$
Furthermore, we have
$$
\begin{aligned}
		\mathcal{F} &= \begin{bmatrix}
			-0.4750 &  -0.3748 &  -0.3452\\
			0.5212  &  0.4075 &  0.3219
		\end{bmatrix},\\
		\mathcal{GU} &= \begin{bmatrix}
			0.2037 &  -0.1067  & -0.3029
		\end{bmatrix},
\end{aligned}
$$
with which Assumption \ref{assumption full row rank} is satisfied. In this example, we assume that all entries of $A_{\textnormal{true}}$ and $B_{\textnormal{true}}$ are unknown, which implies that $ s = \varnothing$ and $\Sigma = \Sigma_d$. Further, $v_{\bar{s}} = v$ and $\mathcal{D}_{\bar{s}\bullet} = \mathcal{D}$.
	
We then take $a(x)$ to be a positive constant and $c(x)$ to be a polynomial of degree at most $2$. To ensure $c(0) = 0$, we impose the restriction that the constant term of $c(x)$ is zero. Let $b(x) = x_1^2 + (x_1+x_2)^2$, $\epsilon = 10^{-4}$ and $\beta(x) = 10^{-4}(x_1^2 + x_2^2)$. The condition \ref{theorem control design ii} in Theorem \ref{theorem control design} is satisfied by choosing $\alpha = 4$. Define $R(x)$ as in \eqref{Rx}. Finally, we formulate an SOS program that imposes the constraints in Theorem \ref{theorem control design}, items \ref{theorem control design iv} and \ref{theorem control design v}, and we solve this for $a(x)$ and $c(x)$. The simulations are conducted in MATLAB, using YALMIP \cite{Yalmip} with the SOS module \cite{YalmipSOS}, and the solver MOSEK \cite{mosek}. We obtain $a(x) = 2.3222$, $c(x) = -2.2991x_1-3.9949x_2 - 2.3222x_1^2$ and
\begin{equation} \label{example 1 Kx}
		K(x) = \frac{c(x)}{a(x)} = -0.99x_1-1.7203x_2 - x_1^2 .
\end{equation}
Subsequently, given the controller $K(x)$, we apply Corollary \ref{corollary global} to verify the global asymptotic stability of the closed-loop system for all systems compatible with the data and the prior knowledge. We take $V$ to be a polynomial of degree at most 2. To ensure $V(0) = 0$, we take the constant term of $V$ to be zero. Define $L(x)$ as in \eqref{Lx}. We choose $\epsilon(x) = \beta(x) = 10^{-4}(x_1^2+x_2^2)$. Then, we formulate an SOS program that imposes conditions \ref{corollary global c} and \ref{corollary global d} in Corollary \ref{corollary global}, and we solve this for $V$. We obtain a radially unbounded function $V \in \mathcal{V}$, given by
    $$
	V(x) = 3.0231 x_1^2 + 2.1599 x_1x_2 + 1.6122 x_2^2.
    $$ 
    This verifies that the origin of the closed-loop system, obtained by interconnecting any system compatible with the data and the controller \eqref{example 1 Kx}, is globally asymptotically stable.
\end{example}

\begin{example} \label{example 2}
In this example, we again consider the system \eqref{example moti sim}. We take $	F(x) := \begin{bmatrix}
		x_1 & x_2 & x_1^2 
\end{bmatrix}^{\top}$ and $G(x) := 1$. Then, the matrices $A_{\textnormal{true}}$ and $B_{\textnormal{true}}$ are
$$
	A_{\textnormal{true}} = \begin{bmatrix}
		0 & 1 & -1 \\
		0 & 0 & 0 \\
	\end{bmatrix}\quad \text{and}\quad B_{\textnormal{true}} = \begin{bmatrix}
		0\\1
	\end{bmatrix}.
$$
Let $x(0) = \begin{bmatrix}
		-1 & 1
\end{bmatrix}^{\top}$ and $u(t)= 10\sin(5t)+5\cos(3t)$ for $t \in [0,5]$. We collect $T=4$ data samples from the system at the time instants $t_0 = 0.40$, $t_1 = 0.47$, $t_2 = 0.54$ and $t_3 = 0.61$. 
During the experiment, the noise samples are drawn independently and uniformly at random from the ball $\{r \in \mathbb{R}^2 \mid \|r\| \leq \omega \}$, where $\omega = 0.005$. Define $\Phi_{11} = \omega^2 T $. The data matrices are
$$
	\begin{aligned}
		\dot{\mathcal{X}} &= \begin{bmatrix}
			5.3834  &  5.9280 &   5.8807 &   5.2518\\
			10.9048 &   7.9133 &   4.0277 &  -0.3664
		\end{bmatrix},\\
		\mathcal{X} &= \begin{bmatrix}
			-0.0468  &  0.3522 &   0.7691 &   1.1618\\
			5.3856  &  6.0505  &  6.4726  &  6.6024
		\end{bmatrix},\\
		\mathcal{U} &= \begin{bmatrix}
			10.9048 & 0 & 0 & 0 \\
			0 & 7.9153 & 0 & 0 \\
			0 & 0 & 4.0279 & 0 \\
			0 & 0 & 0 & -0.3669
		\end{bmatrix}.
    	\end{aligned}
$$
Furthermore, we have
$$
	\begin{aligned}
		\mathcal{F} &= \begin{bmatrix}
			-0.0468  &  0.3522 &   0.7691 &   1.1618\\
			5.3856  &  6.0505  &  6.4726  &  6.6024\\
			0.0022  &  0.1240  &  0.5915  &  1.3498
		\end{bmatrix},\\
		\mathcal{GU} &= \begin{bmatrix}
			10.9048  &  7.9153  &  4.0279  & -0.3669
		\end{bmatrix},
	\end{aligned}
$$
with which Assumption \ref{assumption full row rank} is satisfied. We assume that the second row of $A_{\textnormal{true}}$ and the first entry of $B_{\textnormal{true}}$ are known to be zero. Therefore, we have $s = \{4,5,6,7\}$ and $v_s=0$. Let $\bar{s} = \{1,2,3,8\}$.

We then take $a(x)$ to be a positive constant and $c(x)$ to be a polynomial of degree at most $5$. Moreover, we impose the restriction that the constant term of $c(x)$ is zero, ensuring $c(0) = 0$. Let $b(x) = x_1^2 + (x_1+x_2+x_1^3)^2$, $\epsilon = 10^{-4}$ and $\beta(x) = 10^{-4}(x_1^2 + x_2^2)$. The condition \ref{theorem control design ii} in Theorem \ref{theorem control design} is satisfied by choosing $\alpha = 4$. Define $R(x)$ as in \eqref{Rx}. With this notation in place, we write an SOS program imposing constraints \ref{theorem control design iv} and \ref{theorem control design v} in Theorem \ref{theorem control design}, and we solve this for $a(x)$ and $c(x)$. We obtain $a(x) = 0.7289$ and
$$
\begin{aligned}
c(x) =& -0.6434 x_1 - 1.0037 x_2 + 0.8033 x_1^2\\ 
& -0.2336 x_1x_2 -0.6258 x_1^3-2.6986 x_1^2x_2\\
& +1.9630x_1^4-0.5139 x_1^5.
\end{aligned}
$$
Since the controller $K(x) = \frac{c(x)}{a(x)}$, we have
\begin{equation} \label{example 2 Kx}
\begin{aligned}
K(x) =& -0.8828x_1-1.3770x_2 + 1.1021 x_1^2\\ 
& -0.3205 x_1x_2 -0.8585 x_1^3-3.7025x_1^2x_2\\
& +2.6932x_1^4-0.7051x_1^5.
\end{aligned}
\end{equation}
Subsequently, given this controller $K(x)$, we apply Corollary \ref{corollary alternative} to verify the global asymptotic stability of the closed-loop system for all systems compatible with the data and the prior knowledge. We take $V$ to be a polynomial of degree at most 4. To ensure $V(0) = 0$, we take the constant term of $V$ to be zero. Define $L(x)$ and $Q(x)$ as in \eqref{Lx} and \eqref{Qx}, respectively. Moreover, we choose $\epsilon(x) = \beta(x) = 10^{-4}(x_1^2+x_2^2)$. Then, we formulate an SOS program imposing item \ref{corollary global c} in Corollary \ref{corollary global} and the constraint \eqref{corollary alternative d SOS}, and we solve this for $V$. We obtain a radially unbounded function $V \in \mathcal{V}$ of degree $4$. The phase portrait of the closed-loop system $\dot{x}=A_{\textnormal{true}}F(x)+B_{\textnormal{true}}G(x)K(x)$ and the level sets of $V(x)$ are illustrated in Fig.~\ref{fig Ex2}.
		
In this example, the size of the matrix involved in \eqref{corollary alternative d SOS}, is $3 \times 3$. In comparison, if Corollary \ref{corollary global} would be applied, this would result in a matrix of size $5 \times 5$. This reduction in matrix size is a significant advantage of the alternative approach developed in Section~\ref{section Reduction of computational complexity}. 	
\end{example}
\begin{example} \label{example DC}
    Consider the system
    $$
    \begin{aligned}
        L \dot{I}(t) + R I(t) + K_{s} r\dot{\theta}(t) &= V_c(t)\\
        J r\ddot{\theta}(t) + c r \dot{\theta}^3(t) &= K_i I(t),
    \end{aligned}
    $$
    where $I(t),\theta(t),V_c(t) \in \mathbb{R}$. Here, we denote the current by $I(t)$ and the voltage by $V_c(t)$, both as functions of time $t$. The angle of the motor is $\theta(t)$ and $\dot{\theta}(t)$ is the angular velocity. Moreover, $L$ is the electric inductance and $R$ is the resistance. The motor torque constant is denoted by $K_{s}$ and the electromotive force constant is given by $K_i$. Finally,  $r$, $J$ and $c$ denote the reduction ratio of gears, the moment of inertia of the rotor and the motor viscous damping constant, respectively. We consider $V_c$ as the input and $\theta$, $\dot{\theta}$ and $I$ as the state variables. In this example, we let the true parameters take the following values:
    $$
	\begin{aligned}
		& L = 1\ \mathrm{H}, \\
            & r= 10,\\
	\end{aligned}\quad 
	\begin{aligned}
		& R = 0.5\ \mathrm{\Omega}, \\
            & J = 0.01\ \mathrm{kg/m^2 }, 
	\end{aligned}\quad
	\begin{aligned}
		& K_{s} = K_i = 0.1\ \mathrm{N \cdot m/A}, \\
            & c = 0.01\ \mathrm{N \cdot m \cdot s}. 
	\end{aligned} 
    $$
    The units of $\theta$, $\dot{\theta}$, $I$ and $t$ are $\mathrm{rad}$, $\mathrm{rad/s}$, $\mathrm{A}$ and $\mathrm{s}$, respectively. We aim to design a controller to stabilize the equilibrium point $\theta = 0$, $\dot{\theta} = 0$ and $I = 0$. After defining $x := \begin{bmatrix}
		\theta & \dot{\theta} & I
	\end{bmatrix}^{\top}$ and $u := V_c$, the state-space equations are
	$$
		\begin{aligned}
			\dot{x}_1 &= x_2, \\
			\dot{x}_2 &= x_3 - x_2^3, \\
			\dot{x}_3 &= -x_2 - 0.5x_3 + u.
		\end{aligned}
	$$
    After taking $F(x) := \begin{bmatrix}
		x_2 & x_3 & x_2^3
	\end{bmatrix}^{\top}$ and $G(x) := 1$, the matrices $A_{\textnormal{true}}$ and $B_{\textnormal{true}}$ are
    $$
    A_{\textnormal{true}} = \begin{bmatrix}
		1 & 0 & 0\\
		0 & 1 & -1\\
		-1 & -0.5 & 0
	\end{bmatrix} \quad \text{and} \quad B_{\textnormal{true}} = \begin{bmatrix}
		0\\0\\1
	\end{bmatrix}.
    $$
    Let $x(0) = \begin{bmatrix}
		-10 & 10 & 10
    \end{bmatrix}^{\top}$ and $u(t)= -30\sin(10t)$ for $t \in [0,10]$. We collect $T = 5$ data samples from the system at the time instants $t_0 = 0.25$, $t_1 = 0.30$, $t_2 = 0.35$, $t_3 = 0.40$ and $t_4 = 0.45$. During the experiment, the noise samples are drawn independently and uniformly at random from the ball $\{r \in \mathbb{R}^3 \mid \|r\| \leq \omega \}$, where $\omega = 0.01$. Define $\Phi_{11} = \omega^2 T $. The data matrices are
    $$
	\begin{aligned}
            \dot{\mathcal{X}} &= \begin{bmatrix}
		      1.8215  &  1.6843   & 1.5903  &  1.5352   & 1.5359\\
                -2.9921  & -2.4427   & -1.6013  & -0.5105  &  0.6270\\
                -21.3239  & -7.1086  &  7.7433  & 19.6249  & 25.6670
	\end{bmatrix},\\
    \mathcal{X} &= \begin{bmatrix}
                -9.3414  & -9.2538  & -9.1722   &-9.0944  & -9.0179 \\
                1.8252  &  1.6882   & 1.5858  &  1.5323  &  1.5356\\
                3.0849  &  2.3639   & 2.3850 &   3.0885 &   4.2495
		\end{bmatrix},\\
        \mathcal{U} &= \begin{bmatrix}
			-17.9542 & 0 & 0 & 0 & 0 \\
			0 & -4.2336 & 0 & 0 & 0 \\
			0 & 0 & 10.5235 & 0 & 0 \\
			0 & 0 & 0 & 22.7041 & 0 \\
                0 & 0 & 0 & 0 & 29.3259
		\end{bmatrix}.
    \end{aligned}
    $$
    Moreover, we have
    $$
	\begin{aligned}
	\mathcal{F} &= \begin{bmatrix}
		1.8252  &  1.6882 &   1.5858  &  1.5323 &   1.5356 \\
            3.0849  &  2.3639    & 2.3850  &  3.0885   & 4.2495 \\
            6.0803  &  4.8112   & 3.9878 &   3.5979  &  3.6213
	\end{bmatrix},\\
	\mathcal{GU} &= \begin{bmatrix}
		-17.9542 & -4.2336 & 10.5235 & 22.7041 & 29.3259
	\end{bmatrix},
	\end{aligned}
    $$
    with which Assumption \ref{assumption full row rank} is satisfied. Considering the physical meanings of the state variables, it is natural to assume that the $(1,1)$-entry of $A_{\textnormal{true}}$ is known to be $1$. Besides, we assume that all zero entries of both $A_{\textnormal{true}}$ and $B_{\textnormal{true}}$ are known, while the remaining nonzero entries are unknown. As such, in this example, the index set $s = \{1,2,3,4,9,10,11\}$, $v_1=1$ and $v_i=0$ for all $i \in s \setminus \{1\}$. Moreover, we have $\bar{s} = \{5,6,7,8,12\}$. 
    
    We now apply Theorem \ref{theorem control design} to generate a controller. We take $a(x)$ to be a positive constant and $c(x)$ to be a polynomial of degree at most $5$, and then impose the restriction that the constant term of $c(x)$ is zero, ensuring $c(0) = 0$. Let $b(x) = x_1^2+x_1x_2+x_2^2+(x_1+x_2+x_3-x_2^3)^2$, $\epsilon = 10^{-4}$ and $\beta(x) = 10^{-4}(x_1^2 + x_2^2 + x_3^2)$. Condition \ref{theorem control design ii} in Theorem \ref{theorem control design} is satisfied by choosing $\alpha = 4$. Define $R(x)$ as in \eqref{Rx}. We formulate an SOS program imposing constraints \ref{theorem control design iv} and \ref{theorem control design v} in Theorem \ref{theorem control design}, and we solve this for $a(x)$ and $c(x)$. We obtain $a(x) = 0.1440$ and
    $$
        \begin{aligned}
            c(x) = &-0.1776 x_1 -0.2387 x_2-0.2079 x_3\\
            &  -0.0033  x_1x_2^2 + 0.2727 x_2^3 + 0.4288 x_2^2x_3\\
            &  -0.4287 x_2^5.
        \end{aligned}
    $$
    Since the controller $K(x) = \frac{c(x)}{a(x)}$, we have
    \begin{equation} \label{example DC Kx}
	\begin{aligned}
            K(x) = &-1.2333 x_1 -1.6576 x_2-1.4438 x_3\\
            &  -0.0229  x_1x_2^2 +1.8938 x_2^3 +2.9778 x_2^2x_3\\
            &  -2.9771 x_2^5.
	\end{aligned}
    \end{equation}
    Subsequently, given this controller $K(x)$, we apply Corollary \ref{corollary global} to verify the global asymptotic stability of the closed-loop system for all systems compatible with the data and the prior knowledge. We take $V$ to be a polynomial of degree at most 6. To ensure $V(0) = 0$, we take the constant term of $V$ to be zero. Define $L(x)$ as in \eqref{Lx}. Let $\epsilon(x) = \beta(x) = 10^{-4}(x_1^2+x_2^2 + x_3^2)$. Finally, we formulate an SOS program that imposes constraints \ref{corollary global c} and \ref{corollary global d} in Corollary \ref{corollary global}, and we solve this for $V$. We obtain a radially unbounded function $V \in \mathcal{V}$ of degree 6. This verifies the asymptotic stability of all closed-loop systems obtained by interconnecting any system compatible with the data and the prior knowledge with the controller \eqref{example DC Kx}.
    \begin{figure}[t]
	\begin{center}
        \includegraphics[trim=50 0 0 10,clip, width=9cm]{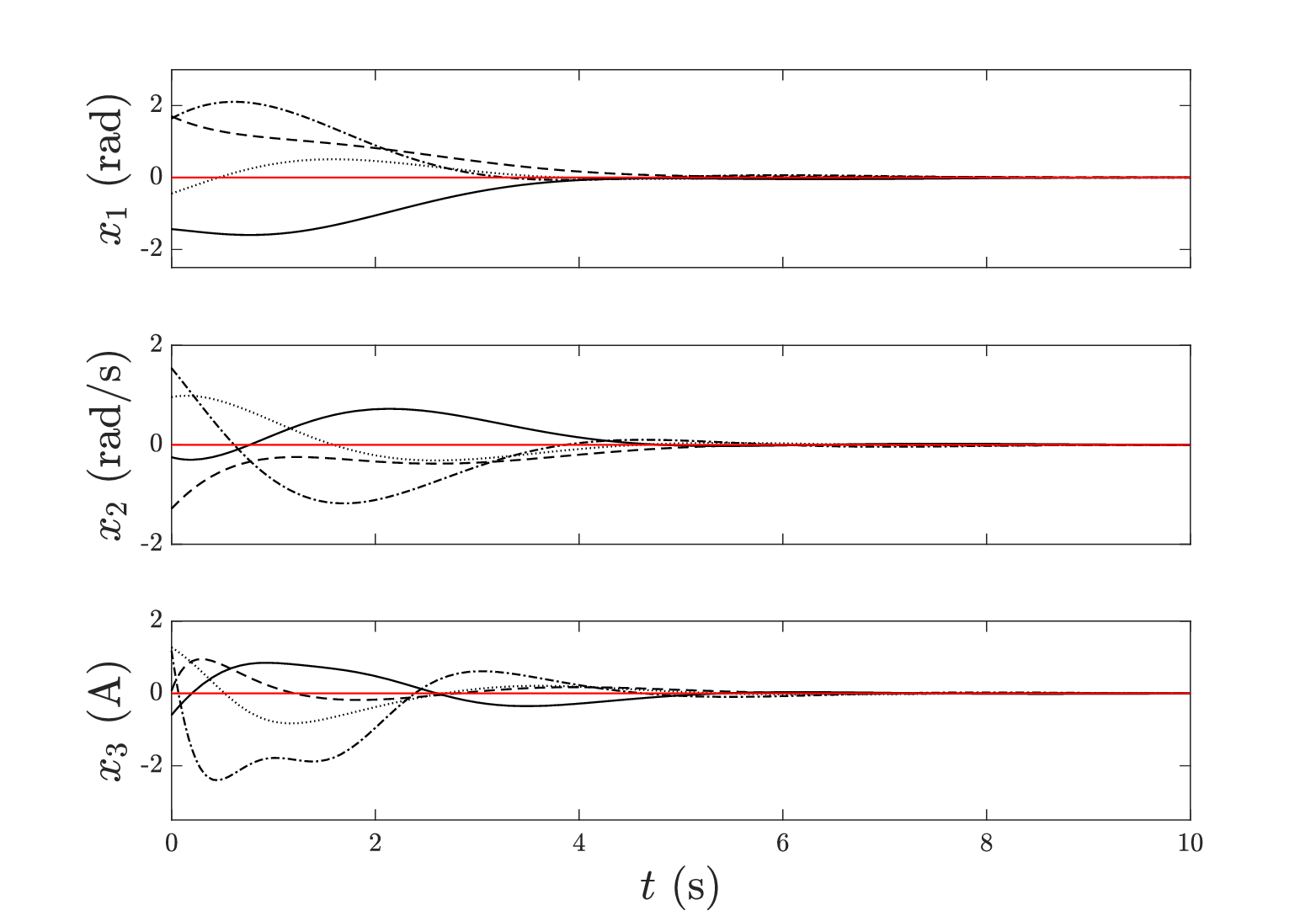}
        \caption{Trajectories of the true closed-loop system for 4 different initial states over 10 seconds. The zero state in each subplot is depicted by a red line.}
        \label{fig DC}
        \end{center}
    \end{figure}

    In Fig.~\ref{fig DC}, we present 4 sets of trajectories of the true closed-loop system over a duration of 10 seconds. Each entry of the initial state $x(0)$ is chosen independently and uniformly at random from the interval $[-2,2]$. These figures further illustrate that the origin of the true closed-loop system is asymptotically stable.
\end{example}

\section{Proofs} \label{section Proofs}
\subsection{A specialized S-lemma}    
The proofs of Theorems \ref{theorem control design} and \ref{theorem DDC strict} mainly rely on a specialized version of the S-lemma. To present this result, we first introduce some terminology. For $\Pi \in \mathbb{S}^{1+r}$, define
$$
    \mathcal{Z}_r(\Pi) := \left\{z \in \mathbb{R}^r \mid \begin{bmatrix}
		1 \\ z
	\end{bmatrix}^{\top} \Pi \begin{bmatrix}
		1 \\ z
	\end{bmatrix} \geq 0 \right\},
$$
and define the set of matrices
$$
\begin{aligned}
    \mathbf{\Pi}_{1,r}&: = \\ &\left\{\begin{bmatrix}
		\Pi_{11} & \Pi_{12}\\\Pi_{21} & \Pi_{22}
	\end{bmatrix} \in \mathbb{S}^{1+r} \mid \Pi_{22} < 0\text{ and } \Pi | \Pi_{22} \geq 0 \right\},
\end{aligned}
$$
where we recall that $\Pi | \Pi_{22} = \Pi_{11}-\Pi_{12}\Pi_{22}^{-1}\Pi_{21}$ is the Schur complement of $\Pi$ with respect to $\Pi_{22}$. Then, let $N \in \mathbf{\Pi}_{1,r}$. It follows from \cite[Thm~3.3]{DDCQMI} that for the matrix $N \in \mathbf{\Pi}_{1,r}$, $z \in \mathcal{Z}_r(N)$ if and only if 
\begin{equation} \label{ast}
    z = -N_{22}^{-1}N_{21}+(-N_{22})^{-\frac{1}{2}}s(N|N_{22})^{\frac{1}{2}}
\end{equation}
for some $s \in \mathbb{R}^r$ such that $\|s\| \leq 1$. In the following lemma, we state a specialized S-lemma \cite{SurveySlemma}, involving one linear and one quadratic inequality, as opposed to two quadratic inequalities. 

\begin{lemma} \label{lemma nonstrict slemma}
Let $N \in \mathbf{\Pi}_{1,r}$, $a \in \mathbb{R}$, and $\lambda \in \mathbb{R}^r$. Then, 
\begin{equation} \label{lemma a}
    \lambda^{\top} z + a \geq 0 \quad \forall z \in \mathcal{Z}_r(N)
\end{equation}
if and only if
\begin{equation} \label{lemma b}
    \begin{bmatrix}
        -\lambda^{\top}N_{22}^{-1}N_{21} + a & \lambda^{\top}(N|N_{22})^{\frac{1}{2}} \\
	(N|N_{22})^{\frac{1}{2}}\lambda & (\lambda^{\top}N_{22}^{-1}N_{21}-a)N_{22}
    \end{bmatrix} \geq 0.
\end{equation}
\end{lemma}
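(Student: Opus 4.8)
The plan is to reduce both conditions \eqref{lemma a} and \eqref{lemma b} to a single scalar--vector inequality $\mu \geq \|w\|$, where I abbreviate $\mu := a - \lambda^{\top}N_{22}^{-1}N_{21}$ and $w := (N|N_{22})^{\frac{1}{2}}(-N_{22})^{-\frac{1}{2}}\lambda$. Once each condition is shown to be equivalent to $\mu \geq \|w\|$, the equivalence of \eqref{lemma a} and \eqref{lemma b} follows immediately.

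First, for the left-hand condition, I would exploit the explicit parametrization \eqref{ast} of $\mathcal{Z}_r(N)$, which is available precisely because $N \in \mathbf{\Pi}_{1,r}$. Substituting $z = -N_{22}^{-1}N_{21} + (-N_{22})^{-\frac{1}{2}}s(N|N_{22})^{\frac{1}{2}}$ into $\lambda^{\top}z + a$, and using that $(N|N_{22})^{\frac{1}{2}}$ is a scalar and $(-N_{22})^{-\frac{1}{2}}$ is symmetric, the affine form collapses to $\lambda^{\top}z + a = \mu + w^{\top}s$. Hence \eqref{lemma a} is equivalent to $\mu + w^{\top}s \geq 0$ for every $s$ with $\|s\| \leq 1$. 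Minimizing the left-hand side over the unit ball (the minimizer being $s = -w/\|w\|$ when $w \neq 0$, and any $s$ when $w = 0$) gives minimum value $\mu - \|w\|$, so \eqref{lemma a} holds if and only if $\mu \geq \|w\|$.

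Second, for the matrix inequality, I would observe that the $(1,1)$ entry of the matrix in \eqref{lemma b} equals $\mu$ and its $(2,2)$ block equals $(\lambda^{\top}N_{22}^{-1}N_{21}-a)N_{22} = \mu(-N_{22})$. Applying the congruence transformation by $T := \mathrm{diag}(1,(-N_{22})^{-\frac{1}{2}})$, which is invertible since $N_{22} < 0$ and therefore preserves positive semidefiniteness, turns \eqref{lemma b} into the canonical form $\begin{bmatrix} \mu & w^{\top} \\ w & \mu I_r \end{bmatrix} \geq 0$. It then remains to check that this block matrix is positive semidefinite exactly when $\mu \geq \|w\|$. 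This I would verify by splitting $\mathbb{R}^r$ into $\mathrm{span}(w)$ and its orthogonal complement, on which the form reduces to $\begin{bmatrix} \mu & \|w\| \\ \|w\| & \mu \end{bmatrix}$ and to $\mu I$, respectively; positive semidefiniteness is thus equivalent to $\mu \geq 0$ together with $\mu^2 \geq \|w\|^2$, that is, to $\mu \geq \|w\|$.

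The main delicacy is the treatment of the degenerate cases, namely when $\mu = 0$ or when $N|N_{22} = 0$ makes the diagonal blocks singular, where a naive Schur-complement argument requiring block invertibility would break down. The congruence reduction to the canonical form $\begin{bmatrix} \mu & w^{\top} \\ w & \mu I_r \end{bmatrix}$ is exactly what lets me avoid these cases uniformly, since the orthogonal-decomposition argument treats $\mu > 0$, $\mu = 0$, and $\mu < 0$ on the same footing and automatically forces $w = 0$ in the boundary case $\mu = 0$. Assembling the two reductions shows that \eqref{lemma a} and \eqref{lemma b} are each equivalent to $\mu \geq \|w\|$, and hence to one another.
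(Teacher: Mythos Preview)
Your proof is correct and follows the same overall strategy as the paper: both reduce \eqref{lemma a} via the parametrization \eqref{ast} to the scalar inequality $\mu \geq \|w\|$ (the paper phrases this as a Cauchy--Schwarz computation of a maximum, you as a direct minimization over the unit ball), and both then show \eqref{lemma b} is equivalent to the same inequality. The one genuine tactical difference is on the matrix side: the paper works directly with \eqref{lemma b} and a Schur complement, which forces separate treatment of the cases $\lambda = 0$ and $-\lambda^{\top}N_{22}^{-1}N_{21}+a = 0$; your congruence by $\mathrm{diag}(1,(-N_{22})^{-1/2})$ reduces \eqref{lemma b} to the canonical block $\begin{bmatrix}\mu & w^{\top}\\ w & \mu I_r\end{bmatrix}$, after which the orthogonal-decomposition argument handles all degenerate cases uniformly. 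This buys you a cleaner proof with no case splits, at the modest cost of one extra congruence step.
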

\begin{proof}
    In case $\lambda = 0$, the statement readily follows since $N_{22} < 0$. Suppose that $\lambda \neq 0$. As $N \in \mathbf{\Pi}_{1,r}$, using \eqref{ast}, \eqref{lemma a} is equivalent to
    $$
		-\lambda^{\top}N_{22}^{-1}N_{21}+\lambda^{\top}(-N_{22})^{-\frac{1}{2}}s(N|N_{22})^{\frac{1}{2}} +a \geq 0,
    $$
    for all $s \in \mathbb{R}^{r}$ with $\|s\| \leq 1$. The latter statement is equivalent to	
    \begin{equation} \label{lemma max}
        \begin{aligned}
            -\lambda^{\top}N_{22}^{-1}N_{21} + a \geq  \max_{\|s\| \leq 1} \left(-\lambda^{\top}(-N_{22})^{-\frac{1}{2}}s(N|N_{22})^{\frac{1}{2}}\right).
        \end{aligned}
	\end{equation}
	We claim that
	\begin{equation}\label{lemma claim}
        \begin{aligned}
            &\max_{\|s\| \leq 1} \left(-\lambda^{\top}(-N_{22})^{-\frac{1}{2}}s(N|N_{22})^{\frac{1}{2}}\right)\\ =&  (N|N_{22})^{\frac{1}{2}} \|(-N_{22})^{-\frac{1}{2}}\lambda\|.
        \end{aligned}
	\end{equation}
	To prove \eqref{lemma claim}, we first recall that $\lambda \neq 0$ and $N_{22} < 0$, and hence $(-N_{22})^{-\frac{1}{2}}\lambda \neq 0$. Since $N|N_{22} \geq 0$, by using Cauchy-Schwarz's inequality, for all $s \in \mathbb{R}^r$ satisfying $s^{\top}s \leq 1$,
    $$
		-\lambda^{\top}(-N_{22})^{-\frac{1}{2}}s(N|N_{22})^{\frac{1}{2}} \leq (N|N_{22})^{\frac{1}{2}}\|(-N_{22})^{-\frac{1}{2}}\lambda\|.
    $$
    Define
    $$
		\bar{s}:=\frac{-(-N_{22})^{-\frac{1}{2}}\lambda}{\|(-N_{22})^{-\frac{1}{2}}\lambda\|}.
    $$
    It is clear that $\bar{s} \in \mathbb{R}^r$ and $\bar{s}^{\top}\bar{s} \leq 1$. Moreover, we have
    $$
		-\lambda^{\top}(-N_{22})^{-\frac{1}{2}}\bar{s}(N|N_{22})^{\frac{1}{2}} = (N|N_{22})^{\frac{1}{2}}\|(-N_{22})^{-\frac{1}{2}}\lambda\|.
    $$
    Therefore, we conclude that \eqref{lemma claim} holds. Consequently, \eqref{lemma max} holds if and only if
    $$
		-\lambda^{\top}N_{22}^{-1}N_{21}+a \geq (N|N_{22})^{\frac{1}{2}} \|(-N_{22})^{-\frac{1}{2}}\lambda\|.
    $$
    Equivalently, $-\lambda^{\top}N_{22}^{-1}N_{21} + a \geq 0$ and
    \begin{equation} \label{lemma square}
		(-\lambda^{\top}N_{22}^{-1}N_{21}+a)^2 \geq (N|N_{22}) \lambda^{\top}(-N_{22})^{-1}\lambda.
    \end{equation}
    Using the latter, it is straightforward to see that \eqref{lemma a} and \eqref{lemma b} are equivalent in case $-\lambda^{\top}N_{22}^{-1}N_{21} + a = 0$. Next, consider the case that $-\lambda^{\top}N_{22}^{-1}N_{21} + a > 0$. The inequality \eqref{lemma square} can then be rewritten as
    \begin{equation} \label{lemma Schur}
        \begin{aligned}
            &(-\lambda^{\top}N_{22}^{-1}N_{21}+a )\\ &- (-\lambda^{\top}N_{22}^{-1}N_{21}+a)^{-1}(N|N_{22}) \lambda^{\top}(-N_{22})^{-1}\lambda \geq 0.
         \end{aligned}
    \end{equation}
    Finally, since $N_{22} < 0$, by using a Schur complement argument, \eqref{lemma Schur} is equivalent to \eqref{lemma b}. This proves the lemma.
\end{proof}

Note that Lemma \ref{lemma nonstrict slemma} considers non-strict inequalities. In what follows, we extend this lemma by addressing strict inequalities. The following theorem is one of the main technical results of this paper. 
\begin{theorem} \label{theorem state-independent}
Let $N \in \mathbf{\Pi}_{1,r}$, $a \in \mathbb{R}$ and $\lambda \in \mathbb{R}^r$. The following statements are equivalent:
\begin{enumerate}[label=(\roman*)]
	\item \label{theorem state-independent a} $	\lambda^{\top} z + a> 0\quad \forall z \in \mathcal{Z}_r(N)$.
	\item \label{theorem state-independent b} $\begin{bmatrix}
		-\lambda^{\top}N_{22}^{-1}N_{21} + a & \lambda^{\top}(N|N_{22})^{\frac{1}{2}} \\
		(N|N_{22})^{\frac{1}{2}}\lambda & (\lambda^{\top}N_{22}^{-1}N_{21} - a)N_{22}
	\end{bmatrix} > 0$.
	\item \label{theorem state-independent c} There exists a scalar $\beta>0$, such that 
	\begin{equation} \label{theorem c}
		\begin{bmatrix}
			-\lambda^{\top}N_{22}^{-1}N_{21} + a - \beta & \lambda^{\top}(N|N_{22})^{\frac{1}{2}} \\
			(N|N_{22})^{\frac{1}{2}}\lambda & (\lambda^{\top}N_{22}^{-1}N_{21} - a)N_{22}
		\end{bmatrix} \geq 0.
	\end{equation}
\end{enumerate}
\end{theorem}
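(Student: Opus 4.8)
The plan is to prove the cycle \ref{theorem state-independent a} $\Rightarrow$ \ref{theorem state-independent c} $\Rightarrow$ \ref{theorem state-independent b} $\Rightarrow$ \ref{theorem state-independent a}, using Lemma~\ref{lemma nonstrict slemma} to translate between the quadratically constrained set $\mathcal{Z}_r(N)$ and matrix inequalities. Two observations set everything up. First, by the parametrization \eqref{ast}, the set $\mathcal{Z}_r(N)$ is the image of the closed unit ball under a continuous affine map, hence it is compact; this is the mechanism that lets strict positivity produce a uniform positive margin. Second, letting $M$ denote the matrix in \ref{theorem state-independent b}, a direct inspection shows that $M$ is exactly the matrix in \eqref{lemma b} at the given $a$, that the matrix in \eqref{theorem c} equals $M-\mathrm{diag}(\beta,0)$, and that the matrix in \eqref{lemma b} with $a$ replaced by $a-\beta$ equals $M-\mathrm{diag}(\beta,-\beta N_{22})$; here $\mathrm{diag}(\cdot,\cdot)$ denotes the $(1+r)\times(1+r)$ block-diagonal matrix with the indicated scalar in the $(1,1)$ position and $r\times r$ block in the lower-right corner. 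Because $N_{22}<0$, each of $\mathrm{diag}(0,-\beta N_{22})$ and $\mathrm{diag}(\beta,-\beta N_{22})$ is positive (semi)definite for $\beta>0$, and tracking these sign-definite discrepancies is what drives every step.

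For \ref{theorem state-independent a} $\Rightarrow$ \ref{theorem state-independent c}, I would invoke compactness: since $z\mapsto\lambda^\top z+a$ is continuous and strictly positive on the compact set $\mathcal{Z}_r(N)$, it attains a minimum $\beta>0$. Then $\lambda^\top z+(a-\beta)\ge0$ for all $z\in\mathcal{Z}_r(N)$, so Lemma~\ref{lemma nonstrict slemma} applied with $a$ replaced by $a-\beta$ gives $M-\mathrm{diag}(\beta,-\beta N_{22})\ge0$. Adding the positive semidefinite matrix $\mathrm{diag}(0,-\beta N_{22})$ then yields $M-\mathrm{diag}(\beta,0)\ge0$, which is precisely \eqref{theorem c}.

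The implication \ref{theorem state-independent c} $\Rightarrow$ \ref{theorem state-independent b} is where I expect the only real subtlety, namely upgrading semidefiniteness to definiteness. Assuming \eqref{theorem c}, i.e. $M-\mathrm{diag}(\beta,0)\ge0$ for some $\beta>0$, evaluating the $(1,1)$ entry forces $-\lambda^\top N_{22}^{-1}N_{21}+a\ge\beta>0$, which in turn makes the lower-right block $(\lambda^\top N_{22}^{-1}N_{21}-a)N_{22}$ positive definite because $N_{22}<0$. Writing $M=(M-\mathrm{diag}(\beta,0))+\mathrm{diag}(\beta,0)$ as a sum of positive semidefinite matrices gives $M\ge0$, and for any null vector $\begin{bmatrix} t & y^\top\end{bmatrix}^\top$ of $M$ the identity $0=\begin{bmatrix} t & y^\top\end{bmatrix}M\begin{bmatrix} t \\ y\end{bmatrix}$ splits into two nonnegative terms, forcing first $t=0$ and then, via the positive definite lower-right block, $y=0$. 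Hence $M>0$, which is \ref{theorem state-independent b}.

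Finally, for \ref{theorem state-independent b} $\Rightarrow$ \ref{theorem state-independent a}, I would perturb in the opposite direction: since $M>0$ and $\mathrm{diag}(1,-N_{22})>0$, there is an $\epsilon>0$ small enough that $M-\epsilon\,\mathrm{diag}(1,-N_{22})\ge0$. This matrix is exactly the one in \eqref{lemma b} with $a$ replaced by $a-\epsilon$, so Lemma~\ref{lemma nonstrict slemma} gives $\lambda^\top z+(a-\epsilon)\ge0$, hence $\lambda^\top z+a\ge\epsilon>0$, for all $z\in\mathcal{Z}_r(N)$, establishing \ref{theorem state-independent a}. The main obstacle throughout is thus not the S-lemma machinery itself but the careful bookkeeping that reconciles the theorem's matrices---where the shift appears only in the $(1,1)$ entry---with the Lemma's matrices, where a shift of $a$ perturbs both diagonal blocks; the compactness of $\mathcal{Z}_r(N)$ is the key structural fact that makes the passage from non-strict to strict inequalities possible.
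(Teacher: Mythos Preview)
Your proof is correct and uses essentially the same ingredients as the paper---compactness of $\mathcal{Z}_r(N)$ together with Lemma~\ref{lemma nonstrict slemma} and careful bookkeeping of the diagonal shifts. The only difference is the cycle orientation: the paper proves \ref{theorem state-independent a} $\Rightarrow$ \ref{theorem state-independent b}, takes \ref{theorem state-independent b} $\Rightarrow$ \ref{theorem state-independent c} as obvious, and then shows \ref{theorem state-independent c} $\Rightarrow$ \ref{theorem state-independent a} by reopening the Schur-complement computation from inside the proof of Lemma~\ref{lemma nonstrict slemma}, whereas your null-vector argument for \ref{theorem state-independent c} $\Rightarrow$ \ref{theorem state-independent b} and your perturbation for \ref{theorem state-independent b} $\Rightarrow$ \ref{theorem state-independent a} invoke only the lemma's statement.
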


\begin{proof}
As $N \in \mathbf{\Pi}_{1,r}$, the equivalence is straightforward if $\lambda = 0$. Therefore, we only consider the case that $\lambda$ is nonzero. Since it is obvious that \ref{theorem state-independent b} implies \ref{theorem state-independent c}, our strategy is to first show that \ref{theorem state-independent a} implies \ref{theorem state-independent b}, and then that \ref{theorem state-independent c} implies \ref{theorem state-independent a}. Since the set $\mathcal{Z}_r(N)$ is closed and bounded, \ref{theorem state-independent a} holds if and only if there exists a scalar $\beta > 0$ such that 
\begin{equation} \label{theorem a beta}
	\lambda^{\top} z + a - \beta\geq 0 \quad \forall z \in \mathcal{Z}_r(N).
\end{equation}
According to Lemma 1, \eqref{theorem a beta} holds if and only if 
\begin{equation} \label{theorem b beta}
	\begin{bmatrix}
		-\lambda^{\top}N_{22}^{-1}N_{21} + a - \beta & \lambda^{\top}(N|N_{22})^{\frac{1}{2}} \\
		(N|N_{22})^{\frac{1}{2}}\lambda & (\lambda^{\top}N_{22}^{-1}N_{21} - a + \beta)N_{22}
	\end{bmatrix} \geq 0.
\end{equation}
As $\beta > 0$ and $N_{22} < 0$, \eqref{theorem b beta} implies that \ref{theorem state-independent b} holds.\\
Finally, we prove that \ref{theorem state-independent c} implies \ref{theorem state-independent a}. Suppose \ref{theorem state-independent c} holds. By using a Schur complement argument, \ref{theorem state-independent c} implies that $-\lambda^{\top}N_{22}^{-1}N_{21} + a > 0$ and
$$
\begin{aligned}
    &(-\lambda^{\top}N_{22}^{-1}N_{21}+a )\\ 
    &- (-\lambda^{\top}N_{22}^{-1}N_{21}+a)^{-1}(N|N_{22}) \lambda^{\top}(-N_{22})^{-1}v > 0.
\end{aligned}
$$
This implies that
$$
	-\lambda^{\top}N_{22}^{-1}N_{21} + a > (N|N_{22})^{\frac{1}{2}} \|(-N_{22})^{-\frac{1}{2}}\lambda\|.
$$
Since \eqref{lemma claim} holds, 
$$
    -\lambda^{\top}N_{22}^{-1}N_{21}+\lambda^{\top}(-N_{22})^{-\frac{1}{2}}s(N|N_{22})^{\frac{1}{2}} +a > 0,
$$
for all $s \in \mathbb{R}^{r}$ such that $\|s\| \leq 1$. By \eqref{ast}, this implies \ref{theorem state-independent a}, which proves the theorem.
\end{proof}

\subsection{Proof of Theorem \ref{theorem control design}} \label{section Proof of Theorem 2}
Define $\rho (x)$ as in \eqref{rhox}. Since $b(x)>0$ for all $x \in \mathbb{R}^n \setminus \{0\}$, condition \ref{theorem control design iv} guarantees $\rho(x) > 0$ for all $x \in \mathbb{R}^n \setminus \{ 0 \}$. Condition \ref{theorem control design ii} ensures that $\rho(x)(AF(x) + BG(x)K(x))/\|x\|$ is integrable on $\{ x \in \mathbb{R}^n \mid \|x\| \geq 1 \}$ for all $(A,B) \in \Sigma$. Since $c(0) = 0$ and $a(x)>0$ for all $x \in \mathbb{R}^n$, the controller $K(x) = \frac{c(x)}{a(x)}$ satisfies $K(0) = 0$. Combining this with $F(0) = 0$, the origin is an equilibrium point of the closed-loop system \eqref{closedloopsystem} for all $(A,B) \in \Sigma$.

Subsequently, since $N \in \mathbb{S}^{1+\ell}$, $\mathcal{Z}_{\ell}(N)$ denotes the set of all vectors satisfying \eqref{bound vsbar}. Recall that under Assumption \ref{assumption full row rank}, the partitioned matrix $N$ satisfies $N_{22} < 0$ and $N|N_{22} \geq 0$. Therefore, we have $N \in \mathbf{\Pi}_{1,\ell}$. In what follows, let $x$ be fixed in $\mathbb{R}^n\setminus\{0\}$. Clearly, we have $\beta(x)>0$. By taking $\lambda = R_{\bar{s}}(x)$ and $a = R_{s}^{\top}(x) v_{s}$, it follows from Theorem \ref{theorem state-independent} that \eqref{theorem control design v SOS} implies \eqref{Rxv tau} holds for all $v_{\bar{s}} \in \mathcal{Z}_{\ell}(N)$. Since this implication holds for all $x \in \mathbb{R}^n \setminus \{ 0 \}$, we have that the inequality \eqref{Rxv tau} holds for all $x \in \mathbb{R}^n \setminus \{ 0 \}$ and all $v_{\bar{s}} \in \mathcal{Z}_{\ell}(N)$.
	
Finally, since all conditions in Theorem \ref{theorem density} are satisfied for all $(A,B) \in \Sigma$, we conclude that Theorem \ref{theorem control design} follows from Theorem \ref{theorem density}. \hfill$\square$

\subsection{Proof of Theorem \ref{theorem DDC strict}} \label{section Proof of Theorem 3}
Given a continuous $K: \mathbb{R}^n \rightarrow \mathbb{R}^m$ satisfying $K(0) = 0$, it follows from the equivalence between \eqref{oridotVx} and \eqref{Lxv tau} that the data $(\dot{\mathcal{X}}, \mathcal{X}, \mathcal{U})$ are informative for closed-loop stability if and only if there exists a radially unbounded $V \in \mathcal{V}$ such that \eqref{Lxv tau} holds for all $v_{\bar{s}} \in \mathcal{Z}_{\ell}(N)$. 

In what follows, let the function $V \in \mathcal{V}$ be fixed. Recall that $N \in \mathbf{\Pi}_{1, \ell}$, as explained in the proof of Theorem \ref{theorem control design}. We apply Theorem \ref{theorem state-independent} for all $x\in \mathbb{R}^n \setminus\{ 0\}$ by taking $\lambda = L_{\bar{s}}(x)$ and $a = L_{s}^{\top}(x) v_{s}$. Then, we have that \eqref{Lxv tau} holds for all $v_{\bar{s}} \in \mathcal{Z}_{\ell}(N)$ if and only if there exists a function $\beta: \mathbb{R}^n \rightarrow \mathbb{R}$ such that for all $x \in \mathbb{R}^{n} \setminus \{0\}$, $\beta(x) > 0$ and \eqref{theorem DDC strict b matrix inequality} holds. Since this result applies to any $V \in \mathcal{V}$, we conclude that there exists a radially unbounded $V \in \mathcal{V}$ such that \eqref{Lxv tau} holds for all $v_{\bar{s}} \in \mathcal{Z}_{\ell}(N)$ if and only if there exist a radially unbounded $V \in \mathcal{V}$ and a $\beta: \mathbb{R}^n \rightarrow \mathbb{R}$ such that for all $x\in \mathbb{R}^n \setminus\{ 0\}$, $\beta(x) > 0$, and \eqref{theorem DDC strict b matrix inequality} holds. \hfill$\square$

\subsection{Proof of Proposition \ref{proposition comparison}}
\label{section Proof of Proposition 4}
To prove the ``if" part, assume that there exist a radially unbounded $V \in \mathcal{V}$ and $\gamma,\ \beta: \mathbb{R}^n \rightarrow \mathbb{R}$ such that for all $x \in \mathbb{R}^n \setminus \{ 0\}$, $\gamma(x) \geq 0$, $\beta (x) > 0$ and \eqref{theorem DDC strict b1 matrix inequality} holds. Let $(A,B) \in \Sigma$ and let $v_{\bar{s}}$ denote the corresponding vector satisfying \eqref{bound vsbar}. Multiply the inequality \eqref{theorem DDC strict b1 matrix inequality} from left by
$
    \begin{bmatrix}
        1 & v_{\bar{s}}^{\top}
    \end{bmatrix}
$
and from right by its transpose. This yields \eqref{Lxv tau}. In other words, the data are informative for closed-loop stability.

To prove the ``only if" statement, assume that the data $(\dot{\mathcal{X}}, \mathcal{X},\mathcal{U})$ are informative for closed-loop stability. By Theorem \ref{theorem DDC strict}, there exist a radially unbounded $V \in \mathcal{V}$ and a $\beta: \mathbb{R}^n \rightarrow \mathbb{R}$ such that for all $x\in \mathbb{R}^n \setminus\{ 0\}$, $\beta(x) > 0$, and \eqref{theorem DDC strict b matrix inequality} holds, It follows from the $(1,1)$-entry of the matrix in \eqref{theorem DDC strict b matrix inequality} that
$$
    L_{\bar{s}}^{\top}(x)N_{22}^{-1}N_{21} - L_{s}^{\top}(x) v_{s} < 0\quad \forall x\in \mathbb{R}^n \setminus\{ 0\}. 
$$
Define
$$
    \gamma(x) := \frac{L_{\bar{s}}^{\top}(x)N_{22}^{-1}L_{\bar{s}}(x)}{L_{\bar{s}}^{\top}(x)N_{22}^{-1}N_{21} - L_{s}^{\top}(x) v_{s}}.
$$
Since $N_{22} < 0$, this $\gamma(x)$ clearly satisfies
$$
    \gamma (x) \geq 0\quad \forall x\in \mathbb{R}^n \setminus\{ 0\}.
$$
For all $x\in \mathbb{R}^n \setminus\{ 0\}$ satisfying $L_{\bar{s}}(x) = 0$, we have $\gamma (x) = 0$ and
$$
	L_{s}^{\top}(x) v_{s} > 0. 
$$
Therefore, there exists a function $\beta: \mathbb{R}^n \rightarrow \mathbb{R}$ such that for all $x\in \mathbb{R}^n \setminus\{ 0\}$, $\beta(x)>0$ and the matrix inequality \eqref{theorem DDC strict b1 matrix inequality} holds. Subsequently, we consider the case that $x\in \mathbb{R}^n \setminus\{ 0\}$ is such that $L_{\bar{s}}(x) \neq 0$. We first use a Schur complement argument on the matrix inequality \eqref{theorem DDC strict b matrix inequality}, which yields
\begin{equation}\label{proposition 1 substitute}
\begin{aligned}
    -L_{\bar{s}}^{\top}(x)&N_{22}^{-1}N_{21} + L_{s}^{\top}(x) v_{s}\\
    &+\frac{(N|N_{22})\left( L_{\bar{s}}^{\top}(x) N_{22}^{-1}L_{\bar{s}}(x)\right)}{-L_{\bar{s}}^{\top}(x)N_{22}^{-1}N_{21} + L_{s}^{\top}(x) v_{s}} > 0.
\end{aligned}
\end{equation}
Since $\gamma (x) > 0$ for all nonzero $x$ satisfying $L_{\bar{s}}(x) \neq 0$, \eqref{proposition 1 substitute} is equivalent to
    $$
    \begin{aligned}
        -2L_{\bar{s}}^{\top}(x)N_{22}^{-1}N_{21} + 2L_{s}^{\top}(x) v_{s}- \gamma(x) (N|N_{22})&\\
        + \gamma^{-1}(x) L_{\bar{s}}^{\top}(x)N_{22}^{-1} L_{\bar{s}}(x)& > 0.
    \end{aligned}   
    $$
Then, using a Schur complement argument, we have
$$
    \begin{bmatrix}
        2L_{s}^{\top}(x) v_{s} & L_{\bar{s}}^{\top}(x) \\ L_{\bar{s}}(x) & 0
    \end{bmatrix} - \gamma (x) N > 0.
$$
Therefore, there exists a function $\beta: \mathbb{R}^n \rightarrow \mathbb{R}$ such that for all $x\in \mathbb{R}^n \setminus\{ 0\}$, $\beta(x)>0$ and \eqref{theorem DDC strict b1 matrix inequality} holds. This proves the proposition.  \hfill$\square$

\subsection{Proof of Corollary \ref{corollary global}} \label{section Proof of Corollary global}
Condition \ref{corollary global c} implies that $V \in \mathbb{R}[x]$ and $V(x) \geq \epsilon(x)$ for all $x \in \mathbb{R}^n$. It follows from \ref{corollary global a} and \ref{corollary global b} that $V(0) = 0$, $\epsilon(x) > 0$ for all $x\in \mathbb{R}^n \setminus\{ 0\}$, and therefore $V \in \mathcal{V}$. Moreover, as $\epsilon(x)$ is radially unbounded, $V$ is also radially unbounded. Then, since $a(x) > 0$ for all $x\in \mathbb{R}^n$, condition \ref{corollary global d} implies that \eqref{theorem DDC strict b matrix inequality} holds for all $x\in \mathbb{R}^n \setminus\{ 0\}$. Therefore, we conclude from Theorem \ref{theorem DDC strict} that the data $(\dot{\mathcal{X}},\mathcal{X},\mathcal{U})$ are informative for closed-loop stability with respect to $K$. \hfill$\square$

\subsection{Proof of Corollary \ref{corollary alternative}} \label{section Proof of Corollary alternative}
Suppose that the conditions of Corollary \ref{corollary alternative} hold. Recall that under conditions \ref{corollary global a}, \ref{corollary global b} and \ref{corollary global c} in Corollary \ref{corollary global}, we have that $V \in \mathcal{V}$ and $V$ is radially unbounded. Since $a(x) > 0$ for all $x\in \mathbb{R}^n$, the function $\frac{\beta(x)}{a(x)}$ is positive for all $x \in \mathbb{R}^n \setminus\{ 0\}$. Therefore, by the constraint \eqref{corollary alternative d SOS}, \eqref{eq form 1} holds with $\beta(x)$ replaced by $\frac{\beta(x)}{a(x)}$. Since \eqref{theorem DDC strict b matrix inequality} and \eqref{eq form 1} are equivalent, it follows from Theorem \ref{theorem DDC strict} that the data $(\dot{\mathcal{X}},\mathcal{X},\mathcal{U})$ are informative for closed-loop stability. \hfill$\square$

\section{Conclusion} \label{section Conclusion}
In this study, we have proposed a novel data-driven stabilization method for a class of the polynomial systems using data corrupted by bounded noise. Beyond handling noise, our method can also readily incorporate prior knowledge on the system parameters. The control design consists of two steps. Firstly, by the dual Lyapunov theorem, we have formulated an SOS program to generate a state-feedback controller, ensuring that the trajectories of all closed-loop systems compatible with the data and the prior knowledge converge to zero for \textit{almost all} initial states. Secondly, we have verified the asymptotic stability of these closed-loop systems using a common Lyapunov function. In both steps, our methods mainly rely on a specialized version of the S-lemma that provides conditions under which a linear inequality is implied by a quadratic one. Compared to the classical S-lemma \cite{SurveySlemma}, our version avoids the introduction of multipliers. This implies that SOS programming for data-driven stabilization does not require the parameterization of multipliers, which leads to clear benefits from a computational point of view.

The main advantage of our method is twofold. Firstly, our method delivers stabilizing controllers in situations where existing approaches, like \cite{DDCmiddleCDC,DDCmiddleTAC}, fail to do so. Secondly, our method requires lower computational effort than the one in \cite{DDCDensity}. However, a potential drawback of the proposed methodology stems from the use of density functions, whose denominator has to be chosen a priori. This can be challenging for high-dimensional systems.

For future research, one possible direction is to extend our results to data-driven optimal control problems, such as minimizing the required input energy. Another open question is the data-driven regulation problem for polynomial systems, which aims at designing controllers in such a way that the system state tracks a given reference signal. Finally, extending our approach to the discrete-time case, based on a counterpart of density functions for discrete-time systems, would also be an interesting direction for future work.

\bibliographystyle{plain}       
\bibliography{ReferencePapers}      

\end{document}